\definecolor{refkey}{gray}{.75}
\definecolor{labelkey}{gray}{.75}
\def\RR{\mathbb   R}
\def\EE{\mathbb   E}
\def\NN{\mathbb   N}
\def\PP{\mathbb   P}
\def\1{{\bf 1}}
\definecolor{forest}{rgb}{0.13, 0.55, 0.13}
\newcommand{\V}{\mathrm{Var}}
 \newcommand{\Et}{\tilde{\mathbb{E}}}
\newtheorem{theorem}{Theorem}[section]
\newtheorem{proposition}{Proposition}[section]
\newtheorem{lemma}{Lemma}[section]
\newtheorem{remark}{Remark}
\newcommand{\argmin}{\mathop{\mathrm{arg\,min}}}
\newcommand\independent{\protect\mathpalette{\protect\independenT}{\perp}}
\def\independenT#1#2{\mathrel{\rlap{$#1#2$}\mkern2mu{#1#2}}}
\begin{document}
\title{\textsc{Adaptive Importance Sampling for Multilevel Monte Carlo Euler method}}
\author[1]{\textsc{Mohamed Ben Alaya}\thanks{Supported by Laboratory of Excellence MME-DII http://labex-mme-dii.u-cergy.fr/}}
\author[1]{\textsc{Kaouther Hajji}}
\author[1]{\textsc{Ahmed Kebaier}\thanks{This research benefited from the support of the chair "Risques Financiers", Fondation du Risque.}\thanks{Supported by Laboratory of Excellence MME-DII http://labex-mme-dii.u-cergy.fr/}}
\affil[1]{{\normalsize{ Universit\'{e} Paris 13,
Sorbonne Paris Cit\'e, LAGA, CNRS (UMR 7539)}}
\normalsize{mba@math.univ-paris13.fr\hspace{0.2 cm}
hajji@math.univ-paris13.fr\hspace{0.2cm}kebaier@math.univ-paris13.fr}}
\maketitle

\begin{abstract}
This paper focuses on the study of an original combination of the Multilevel Monte Carlo method introduced by Giles \cite{Giles} and the popular importance sampling technique. To compute the optimal choice of the parameter involved in the importance sampling method, we rely on Robbins-Monro type stochastic algorithms.  On the one hand,  we extend our previous work  \cite{BenalayaHajjiKebaier} to the Multilevel Monte Carlo setting. On the other hand,  we improve  \cite{BenalayaHajjiKebaier} by providing a new  adaptive algorithm  avoiding the discretization of any additional process. 
Furthermore, from a technical point of view, the use of the same stochastic algorithms as in \cite{BenalayaHajjiKebaier} appears to be problematic. To overcome this issue, we employ an alternative version of stochastic algorithms with projection (see e.g.  Laruelle, Lehalle and Pag\`es \cite{LaruellepagesLehalle}).  In this setting, 
we show innovative limit theorems for a doubly indexed stochastic algorithm which appear to be crucial  to study the asymptotic behavior of the new adaptive Multilevel Monte Carlo estimator. Finally, we illustrate the efficiency of our method through applications from quantitative finance.
\smallskip
\smallskip

\noindent{\em\bf {\small MSC 2010}:} {\small 60E07, 60G51,  60F05, 62L20, 65C05, 60H35.}

\smallskip
\noindent{\em\bf {\small Keywords:}} {\small Multilevel Monte Carlo, Stochastic algorithm, Robbins-Monro, variance reduction, Lindeberg-Feller Central Limit Theorem, Euler scheme, Finance, Heston model.}.
\end{abstract}
\section*{Introduction}
We are interested in estimating the expected payoff value $\EE \psi(X_T)$ in option pricing problems, with $T>0$ and $(X_t)_{ 0 \leq t\leq T}$ is a given diffusion model defined on $\mathcal B=(\Omega,\mathcal F,(\mathcal F_t)_{t\geq 0},\PP)$.
\\In view of reducing the variance in the estimation, we envisage using the importance sampling technique. For the Gaussian setting, 
the practical use of this idea  with Monte Carlo (MC) methods  was firstly studied by Arouna  \cite{Arouna}, Glasserman, Heidelberger and 
Shahabuddin \cite{GlassermanheidelbergerShahabuddin}. Later on, Ben Alaya, Hajji and Kebaier
\cite{BenalayaHajjiKebaier} studied the use of this procedure with the Statistical Romberg  (SR) algorithm known for reducing the computation time in the MC method. The approach used in  \cite{BenalayaHajjiKebaier}, consists on
 applying the Girsanov theorem, to obtain 
\begin{equation}\label{price}
 \quad \EE\psi (X_T)=\EE g(\theta,X_T^\theta, W_T),  \mbox{ where } g: (\theta, x,w)\in\RR^q \times \RR^d \times \RR^q \mapsto \psi(x) e^{-\theta\cdot w-\frac{1}{2} |\theta|^{2}T}\in\RR
\end{equation} 
and $(X_t^\theta)_{0 \leq t \leq T}$ is solution to 
\begin{equation}
\label{eq_Xtheta_MLMC}
 dX_{t}^{\theta}=\left(b(X_{t}^{\theta}) +\sum_{j=1}^q\theta_j \sigma_j(X_{t}^{\theta})\right) dt+\sum_{j=1}^q\sigma_j(X_{t}^{\theta}) dW^j_{t}.
\end{equation}
Note that the case $\theta=0$ corresponds to  the stochastic differential equation satisfied by $(X_t)_{0 \leq t \leq T}$.
÷rm Then, the optimal $\theta_{\rm SR}^*$ reducing the limiting variance of  the SR method is given by
$$
 \theta_{\rm SR}^*= \underset{\theta \in \RR^q}{\argmin} \left[\tilde{\V}(g(\theta,X^{\theta}_{T},W_T))+\tilde \V \left( \nabla_x g(\theta,X^{\theta}_{T},W_T)\cdot U^{\theta}_{T}  \right)\right],
$$
where  $U^{\theta}$ is is a given diffusion associated to the process $(X_{t}^{\theta})_{t \in [0,T]}$ defined on an extension $\tilde{\mathcal B}=(\tilde{\Omega},\tilde{\mathcal F},( \tilde{\mathcal F}_{t})_{t\geq 0},\tilde{\PP})$ of the initial space $\mathcal B$ (see further on).
Then, they suggest to approximate the above variance optimizer by 
\begin{equation}\label{var:SR}
\theta_{\rm SR}^{*,n}=\arg\min_{\theta\in\RR^q}\tilde \EE\left( g^2(\theta,X^{n,\theta}_T,W_T)+(\nabla_xg(\theta,X^{n,\theta}_T,W_T)\cdot U^{n,\theta}_T)^2\right),
\end{equation}
where $ X^{n,\theta}_T$ (resp. $U^{n,\theta}_T$) is the Euler scheme, with time step $T/n$, associated to $X^{\theta}_T$ (resp. $U^{\theta}_T$).

The aim of this paper is to combine the Multilevel Monte Carlo (MLMC)  with the importance sampling technique  for a properly
chosen optimal $\theta$ based on stochastic algorithms. The MLMC method was introduced and popularized for financial applications by  Giles \cite{Giles}. This method can be seen as a generalized version of SR method introduced by Kebaier in \cite{Keb}.
It has been extensively applied to various fields of numerical probability (Brownian stochastic differential equations, L\'evy-driven stochastic differential equations and
more general numerical analysis problems, see e.g. \cite{KBA,Collier,DerLi,GilesSzpruch,Hoel}). For more references, we refer to the web page \url{https://people.maths.ox.ac.uk/gilesm/mlmc_community.html} and the references therein. 
Note that Jourdain and Lelong \cite{JouLel} introduced another approach based on a deterministic minimization of the empirical estimation of the variance $ \V(g(\theta,X^{\theta}_{T},W_T))$.   This approach was recently used by Kebaier and Lelong \cite{ KebLel} to the MLMC methods combined with importance sampling technique. In these two last references, the authors used deterministic optimization algorithm instead of stochastic algorithms.

In the context of Euler discretization scheme, the idea of the MLMC method is to apply the classical MC method for several nested levels of time step sizes and to compute different numbers of paths
on each level, from a few paths when the time step size is small to many paths when the step size is large. 
More precisely, the Euler MLMC method uses information from a sequence of computations with decreasing step size and approximates the quantity $\EE\psi (X_T)$ by
\begin{equation*}
 Q_n=\frac{1}{N_0} \sum_{i=1}^{N_0} \psi(X_{T,i}^{m^0}) + \sum_{\ell=1}^{L} \frac{1}{N_\ell} \sum_{i=1}^{N_\ell} \left( \psi(X_{T,i}^{\ell, m^{\ell}})-\psi(X_{T,i}^{\ell, m^{\ell-1}}) \right), \quad m \in \NN \setminus \{0,1\}.
\end{equation*}
The time step sizes is defined by $T/m^{\ell}$, $\ell=0,1,...,L$ where $L=\frac{ \log{n}}{\log{m}}$ and $m$ is the refinement factor. Here, $X_{T}^{\ell, m^\ell}$ and $X_{T}^{\ell, m^{\ell-1}}$ denote the Euler schemes of $X_T$ with time steps $ T / m^{\ell}$ and $T/ m^{\ell-1}$. 
It turns out that the optimal $\theta^*$ in this case is solution to the problem:
$
 \theta^*= \underset{\theta \in \RR^q}{\argmin} \tilde{\V} \left( \nabla_x g(\theta,X^{\theta}_{T},W_T)\cdot U^{\theta}_{T}  \right),
$
see Section \ref{Preliminaries} for more details.

In the current paper, we come up with a new idea to approximate this latter optimal $\theta^*$ through the minimization of  
\begin{equation}
 v_{\ell}(\theta):=\mathbb{E}\left[  \left( \sqrt{\frac{m^\ell}{(m-1)T}} (\psi(X_{T}^{m^\ell, \theta})-\psi(X_{T}^{m^{\ell-1}, \theta}) )\right)^2 \right],
\end{equation}
which is nothing but a proper approximation of $\tilde{\V} \left( \nabla_x g(\theta,X^{\theta}_{T},W_T)\cdot U^{\theta}_{T}  \right)$, as $\ell$ tends to infinity.
The advantage of such  approach  is that unlike \eqref{var:SR}, the new MLMC variance optimizer is approximated without any need to compute $\nabla\psi$ or to discretize the process $U$ appearing  naturally in the limit variance of the problem. 
This is clearly more convenient for practitioners since we also require less conditions on the regularity of the payoff $\psi$ and the diffusion coefficients. 

Furthermore,  Ben Alaya, Hajji and Kebaier \cite{BenalayaHajjiKebaier} consider the constrained version of Robbins Monro algorithm proposed by Chen et al. \cite{CGG, CHZY} and the unconstrained one proposed by Lemaire and Pag\`es \cite{LemPag} to recursively approximate   $\theta_{\rm SR}^{*,n}$.
From a technical point of view, the use of these two stochastic algorithms  seems to be problematic in the study of the asymptotic normality of the MLMC estimator coupled with the importance sampling technique. To overcome this difficulty, we consider an alternative version of Robbins-Monro type algorithms, namely the stochastic algorithm with projection (see \cite{LaruellepagesLehalle}). Moreover,  in \cite{BenalayaHajjiKebaier}  the proof of the central limit theorem for the adaptive SR algorithm relies on the law stable convergence theorem for the error Euler scheme obtained in Jacod and Protter \cite{JJPP} and which is given by
$$\sqrt{n} \left(X^n-X \right) \overset{\rm stably}\Longrightarrow  U, \quad \mbox{as } n\rightarrow \infty.$$
This theorem cannot be used in the MLMC setting, since we should consider the Euler error on two consecutive levels $m^{\ell-1}$ and $m^\ell$. To cope with this situation, we rather use  Theorem 3 in Ben Alaya and Kebaier \cite{KBA} given by  
$$\sqrt{\frac{m^\ell}{(m-1)T}}(X^{m^\ell}- X^{ m^{\ell-1}}) \overset{\rm stably}\Longrightarrow  U , \quad \mbox{as } \ell\rightarrow \infty.$$

In the next section, we recall some essential results on the MLMC method. In section \ref{sec:OP}, we set our optimisation problem. In section \ref{algo:sto}, we introduce a doubly indexed constrained stochastic algorithm 
and prove innovative convergence theorems on it (see Remark \ref{original:dl} below). A distinctive feature of this limit theorem is that for a fixed order in letting the indexes tend to infinity we prove almost sure convergence to the optimal parameter $\theta^*$ ( see the second assertion of 
Theorem \ref{constrained_algo_MLMC} and  Theorem \ref{th:convergence_MLMC} ). However, when reverting this order, we prove a stable law convergence towards  $\theta^*$ (see Theorem \ref{constrained_rec} and the first assertion of Theorem \ref{constrained_algo_MLMC}). To the best of our knowledge such result does not exist in the literature around stochastic algorithms and it is crucial for the study of the adaptive MLMC estimator.
Next, in section \ref{CLTadp_MLMC}, we first introduce the new adaptive algorithm obtained by combining together the importance sampling procedure and the  MLMC method.
Then, taking advantage of  Section \ref{algo:sto} results, we prove a Lindeberg-Feller central limit theorem for this new algorithm (see Theorem \ref{CLTMLMC_adaptatif2}). 
In section \ref{hestonmodel_MLMC}, we give a time complexity analysis of the adaptive MLMC algorithm and we proceed to numerical simulations to illustrate the efficiency of this new method for pricing an European call option under the Black \& Scholes model. 

\section{Preliminaries}
\label{Preliminaries}
Let $(X_t)_{ 0 \leq t\leq T}$ be the process with values in $\RR^d$, solution to the diffusion
\begin{equation}\label{1_MLMC}
dX_{t} = b(X_{t})dt + \sum_{j=1}^q {{\sigma_j(X_{t})}}dW^j_{t},\;\;\; X_0=x\in \mathbb R^d,
\end{equation}
where $\displaystyle W=(W^1,\dots,W^q)$ is a $q$-dimensional Brownian motion on some given filtered probability space
 $\mathcal B=(\Omega,\mathcal F,(\mathcal F_t)_{t\geq 0},\PP)$ and $(\mathcal F_t)_{t\geq 0}$ is the standard Brownian filtration. The functions  $\displaystyle b:{\mathbb R^d\longrightarrow \mathbb R^d}$ and  
$\displaystyle{\sigma_j:\mathbb R^d\longrightarrow \mathbb R^{d}}$, $1\leq j \leq q$,  satisfy condition
$$
\quad \forall x,y\in\mathbb R^d \quad |b(x)-b(y)|+\sum_{j=1}^q|\sigma_j(x)-\sigma_j(y)|\leq C_{b,\sigma}|x-y|,\;\;\mbox{ with } C_{b,\sigma}>0.\leqno({{\mathbf{\mathcal H}_{b,\sigma}}})
$$
This ensures strong existence and uniqueness of solution  of (\ref{1_MLMC}). 
In practice, we consider the Euler continuous approximation $X^n$ of the process $X$, with time step $\delta=T/n$ given by
\begin{equation}\label{euler_MLMC}
dX^n_t=b(X_{\eta_n(t)})dt+\sum_{j=1}^q \sigma_j(X_{\eta_n(t)})dW_t,\;\;\;\eta_n(t)=[t/\delta]\delta.
\end{equation}
It is well known that under  condition $(\mathcal H_{b,\sigma})$ we have the almost sure convergence of $X^n$ towards $X$ together with the  following property   (see e.g. Bouleau and L\'epingle \cite{BouLep}) 
 $$\label{PSR}\;\; \displaystyle{ \forall p\geq 1,\;\;\;\sup_{0 \leq t \leq T}|X_t|,\,\sup_{0 \leq t \leq T}|X^n_t|\in L^p\quad\mbox{ and }\;\; \mathbb{E}\left[{\sup_{0 \leq t \leq T}}\arrowvert X_t -  {X}^n_t\arrowvert^{p} \right] \leq\frac{K_p(T)}{n^{p/2}}},\leqno{(\bf \mathcal P)}$$
  where  $K_p(T)$ is a positive constant depending only on $b$, $\sigma$, $T$, $p$ and $q$.

The Euler MLMC method uses information from a sequence of computations with decreasing step sizes and approximates the quantity $\EE\psi (X_T)$ by
\begin{equation*}
 Q_n=\frac{1}{N_0} \sum_{i=1}^{N_0} \psi(X_{T,i}^{m^0}) + \sum_{\ell=1}^{L} \frac{1}{N_\ell} \sum_{i=1}^{N_\ell} \left( \psi(X_{T,i}^{\ell, m^{\ell}})-\psi(X_{T,i}^{\ell, m^{\ell-1}}) \right), \quad m \in \NN \setminus \{0,1\}.
\end{equation*}
The time step sizes is defined by $T/m^{\ell}$, $\ell=0,1,...,L,$ where $L=\frac{ \log{n}}{\log{m}}$ and $m$ is the refinement factor.
For the first empirical mean, the random variables $(X_{T,i}^{m^{0}})_{1 \leq i \leq N_0}$ are independent copies of $X_{T}^{m^{0}}$ which denotes the Euler scheme with time step T.
For $\ell \in \{1,...,L\}$, the couples $(X_{T,i}^{\ell, m^\ell},X_{T,i}^{\ell, m^{\ell-1}})_{1 \leq i \leq N_\ell}$ are independent copies of 
$(X_{T}^{\ell, m^\ell},X_{T}^{\ell, m^{\ell-1}})$ whose components denote the Euler schemes with time steps $ T / m^{\ell}$ and $T/ m^{\ell-1}$. However, for fixed $\ell$, the simulation of $X_{T}^{\ell, m^\ell}$ and $X_{T}^{\ell, m^{\ell-1}}$ has to be based on the same Brownian path. 
Giles \cite{Giles} proved that the MLMC method reduces efficiently the computational time complexity of the combination of Monte Carlo method and the Euler discretization scheme.  
In fact, the time complexity in the Monte Carlo method is equal to $n^{2\alpha+1}$ 
and is reduced to $n^{2\alpha} (\log{n})^{2}$ in the MLMC method where $\alpha \in [1/2,1]$ is the order of the rate of convergence of the weak error given by $\varepsilon_n= \EE \psi(X_T^n) - \EE \psi(X_T)$.
So, it is worth introducing the following assumption
\begin{equation*}
\label{Hepsilon} \quad\mbox{for } \alpha \in [1/2,1]\quad n^{\alpha}\varepsilon_n\rightarrow 
C_{\psi}, \quad C_{\psi}\in \RR.\leqno(\mathcal H_{\varepsilon_n})
\end{equation*}
In their recent work, Ben Alaya and Kebaier \cite{KBA}  proved that  for  $\mathscr C^1$  coefficients $\sigma$ and $b$   satisfying condition $({{\mathbf{\mathcal H}_{b,\sigma}}})$, 
\begin{equation}\label{CV:stable}
r_\ell (X^{m^\ell}- X^{ m^{\ell-1}})\overset{\rm stably}\Longrightarrow U\quad \mbox{with }\;r_\ell:=\sqrt{\frac{m^\ell}{(m-1)T}},
\end{equation}
where the process $U$ is  defined on $\mathcal {\tilde B}$  an extension of the initial space  $\mathcal B$ and is solution to
\begin{equation}
\label{eq_U_MLMC}
dU_t=\dot b(X_t)U_tdt+\sum_{j=1}^q\dot{\sigma}_j(X_t)U_tdW^j_t-\frac{1}{\sqrt{2}}\sum_{j,\ell=1}^q\dot{\sigma}_j(X_t){\sigma}_{\ell}(X_t)d\tilde W^{\ell j}_t,
\end{equation}
where $\tilde W$ is a $q^2$-dimensional standard Brownian motion, defined on the extension $\tilde {\mathcal B}$, independent of $W$, and $\dot b$ (respectively $(\dot\sigma_j)_{1\leq j\leq q}$) is the Jacobian matrix of $b$ (respectively $(\sigma_j)_{1\leq j\leq q})$. For a proof of \eqref{CV:stable} see Theorem 3 in \cite{KBA}.

Moreover they proved a central limit theorem for the Euler MLMC method with a rate of convergence equal to $n^{\alpha}$, $\alpha \in [1/2, 1]$ (see Theorem 4 of \cite{KBA}). In more details, 
for sample sizes $(N_\ell)_{0\leq \ell \leq L}$ of the form
\begin{equation}
\label{sample_size_MLMC}
N_\ell=\frac{n^{2 \alpha} (m-1) T}{m^\ell a_\ell} \sum_{\ell=1}^{L} a_\ell, \quad \ell \in \{0,...,L\} \mbox{ and } L=\frac{\log n}{\log m},
\end{equation}
where $(a_\ell)_{\ell \in \NN}$ is a real sequence of positive terms satisfying
$$ \label{W} \;\; \displaystyle{\lim_{L \rightarrow \infty} \sum_{\ell=1}^{L} a_\ell = \infty \mbox{ and } \lim_{L \rightarrow \infty} \frac{1}{(\sum_{\ell=1}^{L} a_\ell)^{p/2}} \sum_{\ell=1}^{L} a_\ell^{p/2}=0, \mbox{ with } p>2},\leqno{(\bf \mathcal W)} 
$$
for a function $\psi$ satisfying 
$$  \label{Hpsi} \quad\mathbb P(X_T\notin \mathcal{D}_{\bf\dot{\psi}} )=0, \mbox{ where } 
\mathcal{D}_{\bf\dot{\psi}}:=\{x\in\mathbb R^d\;|\; \psi \;\mbox{is differentiable at}\; x\},\leqno{(\mathcal H_{\psi})}
$$
and 
\begin{equation} \label{condition_dif_psi1}
 |\psi(x)-\psi(y)|\leq C(1+|x|^p+|y|^p)|x-y|,\;\;\;\mbox{for some $C,p>0$},
\end{equation}
under condition $(\mathcal H_{\varepsilon_n})$, they proved that
\begin{equation}\label{variance_limit}
n^{\alpha}\left(Q_n -\mathbb{E}\psi(X_{T})\right) \underset{n \rightarrow \infty} {\overset{\mathcal{L}}{\longrightarrow}} \mathcal{N} \left(C_{\psi}, {\tilde\V} \left( \nabla \psi(X_{T})\cdot U_{T}  \right) \right).
\end{equation}

The following lemma gives a helpful result that will be used several times in our forthcoming proofs. The proof is postponed to the appendix section. 
\begin{lemma}\label{lem:tech}
Let assumption $({{\mathbf{\mathcal H}_{b,\sigma}}})$ and \eqref{condition_dif_psi1} hold. Then,
\begin{enumerate}
 \item for all $q>1$, we have 
 \begin{equation}
\label{uniform_integrability}
  \sup_{\ell\ge 1} \mathbb E \left| r_\ell \ (\psi(X_{T}^{m^\ell})-\psi(X_{T}^{m^{\ell-1}}) ) \right|^{2q} < \infty.  
\end{equation}
\item If moreover assumption $(\mathcal H_{\psi})$ is satisfied and the coefficients $\sigma$ and $b$ are in $\mathscr C^1_b$, then
\begin{equation}
\label{cv_stable_MLMC}
 r_\ell \ \left(\psi(X_{T}^{m^\ell})-\psi(X_{T}^{m^{\ell-1}}) \right) \overset{\rm stably}\Longrightarrow \nabla \psi(X_T).U_T,  \quad \mbox{as } \ell \rightarrow \infty.
\end{equation}
\item Therefore, combining the first and the second assertions yields
\begin{equation}
\label{result1_MLMC}
 \EE \left( r_\ell \ \left(\psi(X_{T}^{m^\ell})-\psi(X_{T}^{m^{\ell-1}}) \right) \right)^{k} \underset{\ell\rightarrow\infty}{\longrightarrow} \tilde{\EE} \left( \nabla \psi(X_T).U_T \right)^{k} < \infty \mbox{ for } k \in \{1,2 \}.
\end{equation}
\end{enumerate}
\end{lemma}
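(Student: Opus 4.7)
The plan is to treat the three assertions in order. Assertions (1) and (3) are essentially quantitative consequences of property $(\mathcal P)$ combined (for the third) with the stable convergence \eqref{CV:stable} of \cite{KBA}, while assertion (2) requires a first-order expansion of $\psi$ at $X_T$ that is compatible with the stable convergence framework.

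For assertion (1), I would start from \eqref{condition_dif_psi1} to bound
\begin{equation*}
|\psi(X_T^{m^\ell}) - \psi(X_T^{m^{\ell-1}})|^{2q} \leq C^{2q}(1 + |X_T^{m^\ell}|^p + |X_T^{m^{\ell-1}}|^p)^{2q}\,|X_T^{m^\ell} - X_T^{m^{\ell-1}}|^{2q}.
\end{equation*}
After a Cauchy--Schwarz splitting, the polynomial factor is bounded uniformly in $\ell$ thanks to the $L^p$ moment bound in $(\mathcal P)$. For the difference factor, the triangle inequality $|X_T^{m^\ell} - X_T^{m^{\ell-1}}| \leq |X_T^{m^\ell} - X_T| + |X_T - X_T^{m^{\ell-1}}|$ together with the strong error bound in $(\mathcal P)$ yields $\mathbb E|X_T^{m^\ell} - X_T^{m^{\ell-1}}|^{4q} \leq C\,m^{-2\ell q}$. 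Multiplying by $r_\ell^{2q}\sim m^{\ell q}$ gives a quantity bounded uniformly in $\ell$.

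For assertion (2), the decisive input is that $\psi$ is differentiable at $X_T$ with probability one under $(\mathcal H_{\psi})$, while both Euler schemes converge to $X_T$ almost surely. On the event of full measure where this holds, the expansion
\begin{equation*}
\psi(X_T^{m^\ell}) - \psi(X_T) = \nabla\psi(X_T)\cdot(X_T^{m^\ell} - X_T) + \varepsilon_\ell^+\,|X_T^{m^\ell} - X_T|
\end{equation*}
is valid with $\varepsilon_\ell^+ \to 0$ a.s., and likewise at level $\ell-1$. Subtracting, one gets
\begin{equation*}
r_\ell\bigl(\psi(X_T^{m^\ell}) - \psi(X_T^{m^{\ell-1}})\bigr) = \nabla\psi(X_T)\cdot r_\ell(X_T^{m^\ell} - X_T^{m^{\ell-1}}) + R_\ell.
\end{equation*}
Since $r_\ell|X_T^{m^\ell} - X_T|$ and $r_\ell|X_T^{m^{\ell-1}} - X_T|$ are bounded in $L^2$ by $(\mathcal P)$, their products with the null sequences $\varepsilon_\ell^\pm$ force $R_\ell \to 0$ in probability. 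The main term converges stably to $\nabla\psi(X_T)\cdot U_T$ thanks to \eqref{CV:stable} and the $\mathcal F$-measurability of $\nabla\psi(X_T)$, and a Slutsky-type lemma for stable convergence delivers the claim.

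Assertion (3) then follows by upgrading weak convergence into moment convergence: assertion (1) applied with any $q>1$ gives $\sup_\ell \mathbb E|r_\ell(\psi(X_T^{m^\ell})-\psi(X_T^{m^{\ell-1}}))|^{2q} < \infty$ with $2q>2$, which provides uniform integrability of the $k$-th powers for $k\in\{1,2\}$. I expect the subtlest step to be the control of $R_\ell$ in assertion (2): the cancellation must rely on the \emph{almost sure} convergence of the Euler schemes to $X_T$ (not merely on the $L^p$ rate), and the conclusion must then be grafted onto the stable convergence of the leading term rather than onto ordinary weak convergence.
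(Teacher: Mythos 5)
Your proposal is correct and follows essentially the same route as the paper's own proof: assertion (1) from the growth condition \eqref{condition_dif_psi1} combined with the moment and strong-error bounds in $(\mathcal P)$, assertion (2) via a Taylor--Young expansion of $\psi$ at $X_T$ applied at both levels, with the remainder killed by the tightness of $r_\ell(X_T^{m^\ell}-X_T)$ and $r_\ell(X_T^{m^{\ell-1}}-X_T)$ and the stable convergence \eqref{CV:stable} handling the leading term, and assertion (3) by uniform integrability drawn from (1). The only difference is cosmetic: you spell out the Cauchy--Schwarz bookkeeping for (1), which the paper dismisses as immediate.
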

\section{Optimization problem}\label{sec:OP}
The purpose of this work is to combine the importance sampling technique and the  MLMC to approximate   $\mathbb{E} \psi(X_{T})=\mathbb{E}g(\theta, X_{T}^{\theta},W_{T}), \theta \in \RR^q$, by
\begin{equation}
\frac{1}{N_0} \sum_{i=1}^{N_0}g(\theta, \hat X_{T,i}^{m^0,\theta},\hat W_{T,i})+ 
\sum_{\ell=1}^{L} \frac{1}{N_\ell} \sum_{i=1}^{N_\ell}\left(g(\theta, X_{T,i}^{\ell,m^\ell,\theta},W_{T,i}^\ell)-g(\theta, 
X_{T,i}^{\ell,m^{\ell-1},\theta},W_{T,i}^\ell)\right),
\end{equation}
where $X_T^{n,\theta}$ is the Euler scheme associated to $X_T^\theta$ (\ref{eq_Xtheta_MLMC}) with time step $T/n$ and $g(\theta,x,y)=\psi(x) e^{-\theta\cdot y-\frac{1}{2} |\theta|^{2}T}, \forall x\in\RR^d \mbox{ and } y\in\RR^q.$
According to relation \eqref{variance_limit} and under appropriate assumptions, the limit variance in this case is given by 
$
 {\tilde\V} \left( \nabla_x g(\theta,X^{\theta}_{T},W_T)\cdot U^{\theta}_{T}  \right),
$
where  $U^{\theta}$ is the weak limit process of the error $ r_{\ell} (X^{m^\ell,\theta}- X^{m^{\ell-1},\theta})$ defined on the extension $\mathcal {\tilde B}$ and solution to
\begin{equation}
\label{eq_Utheta_MLMC}
dU^{\theta}_t=\left(\dot b(X_{t}^{\theta}) +\sum_{j=1}^q\theta_j \dot\sigma_j(X_{t}^{\theta})\right)U^{\theta}_t dt+\sum_{j=1}^q\dot{\sigma}_j(X^{\theta}_t)U^{\theta}_tdW^j_t-\frac{1}{\sqrt{2}}\sum_{j,\ell=1}^q\dot{\sigma}_j(X^{\theta}_t){\sigma}_{\ell}(X^{\theta}_t)d\tilde W^{\ell j}_t.
\end{equation}
Note that, $U$ and $U^\theta$ are the same processes obtained for the asymptotic behavior of $(X^{ m^\ell} -X)$ and $(X^{m^{\ell}, \theta}-X^{\theta})$. Moreover, according to Proposition 2.1 in Kebaier \cite{Keb},  we have
${\tilde {\mathbb E} } \left( \nabla_x g(\theta,X^{\theta}_{T},W_T)\cdot U^{\theta}_{T}  \right)=0$, so using Girsanov's theorem, we get
$$
{\tilde\V} \left( \nabla_x g(\theta,X^{\theta}_{T},W_T)\cdot U^{\theta}_{T}  \right) =\tilde{\mathbb{E}}\left[\left(\nabla \psi(X_{T})\cdot U_{T} \right)^{2} e^{-\theta \cdot W_{T}+\frac{1}{2}|\theta|^{2}T} \right].
$$
\vspace{0.1cm}
To implement properly this importance sampling technique, we have to approximate the optimal  $\theta^*$ solution to 
\begin{equation}
\label{eq_variance0_MLMC}
\theta^*=\underset{\theta \in \RR^q}{\argmin}\, v(\theta):=\tilde{\mathbb{E}}\left[\left(\nabla \psi(X_{T})\cdot U_{T} \right)^{2} e^{-\theta \cdot W_{T}+\frac{1}{2}|\theta|^{2}T} \right].
\end{equation}
As expected, to ensure the existence and uniqueness of $\theta^*$, we will require more regularity assumptions.  So we introduce 
\paragraph{\normalsize{Assumption $({\mathcal R}_{\psi,a})$:}}
\begin{itemize}
\item[$\bullet$] condition ($\mathcal H_{\psi}$) is satisfied and $ \mathbb{P}((\nabla \psi(X_{T})\cdot U_T)\neq 0) >0$,
 \item[$\bullet$]     there exists $a>1$ such that $\mathbb{E}\left[|\nabla{\psi}(X_{T})|^{2a}\right]<\infty$.
\end{itemize}
Since only $\nabla\psi$ is involved in the above variance term \eqref{eq_variance0_MLMC},  the arguments of the proof of Proposition 2.1 in \cite{BenalayaHajjiKebaier} can be easily adapted to get the following result.
\begin{proposition}\label{prop:cvx}
Let assumption $({\mathcal R}_{\psi,a})$ holds and  the diffusion coefficients $\sigma$ and $b$ be in $\mathscr C^1_b$\footnote[1]{The notation $\mathscr C^1_b$ stands for  the set of $\mathscr C^1$ functions with bounded first derivative.}. Then,
the function  $\theta \mapsto v(\theta)$   is $\mathscr C^{2}$ and is strictly convex  with  $\nabla v(\theta)=\tilde \EE H(\theta,X_T,U_T,W_T)$ where
\begin{equation}
\label{eq_gradientv}
 H(\theta,X_T,U_T,W_T):= (\theta T-W_{T})(\nabla \psi(X_T)\cdot U_T)^2 e^{- \theta\cdot W_{T} + \frac{1}{2} |\theta|^{2}T}.
\end{equation}
Moreover, there exists a unique $\theta^{*} \in \mathbb{R}^{q}$ such that $\min_{\theta \in \mathbb{R}^{q}} v(\theta)=v(\theta^{*})$.
\end{proposition}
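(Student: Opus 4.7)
The plan is to follow the general strategy of the analogous Proposition~2.1 in \cite{BenalayaHajjiKebaier}, adapted here to the MLMC limit process $U$ defined by \eqref{CV:stable}--\eqref{eq_U_MLMC}: establish $\mathscr{C}^2$-regularity with the claimed gradient formula via differentiation under the expectation, then prove strict convexity directly from the Hessian, and finally obtain existence of a unique minimizer by a coercivity argument.

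For regularity, I would differentiate the integrand $\Phi(\theta):=(\nabla\psi(X_T)\cdot U_T)^2\, e^{-\theta\cdot W_T+\frac{1}{2}|\theta|^2 T}$ in $\theta$. Since the exponent is smooth with gradient $\theta T-W_T$ and Hessian $T I_q$, each of $\Phi$, $\nabla_\theta\Phi$ and $\nabla^2_\theta\Phi$ is a polynomial in $\theta$ and $W_T$ multiplied by $(\nabla\psi(X_T)\cdot U_T)^2$ and by the exponential weight. To justify interchanging differentiation and $\tilde{\EE}$ uniformly on compact $\theta$-neighborhoods, I would apply H\"older with the exponent $a>1$ from $(\mathcal R_{\psi,a})$: $\EE|\nabla\psi(X_T)|^{2a}$ is finite by hypothesis, the $L^p$-moments of $U_T$ for any $p$ are finite by standard estimates on the linear SDE \eqref{eq_U_MLMC} (using $b,\sigma\in\mathscr{C}^1_b$ together with property $(\mathcal P)$), and $W_T$ being Gaussian absorbs every polynomial-in-$W_T$-times-exponential factor. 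Dominated convergence then yields $v\in\mathscr{C}^2(\RR^q)$ and the formula $\nabla v(\theta)=\tilde{\EE}\,H(\theta,X_T,U_T,W_T)$.

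For strict convexity, a direct computation gives
$$\Hess(v)(\theta)=\tilde{\EE}\left[(\nabla\psi(X_T)\cdot U_T)^2\bigl(T I_q+(\theta T-W_T)(\theta T-W_T)^\top\bigr)e^{-\theta\cdot W_T+\frac{1}{2}|\theta|^2 T}\right],$$
and for any $\xi\in\RR^q\setminus\{0\}$ the rank-one contribution is nonnegative, so
$$\xi^\top\Hess(v)(\theta)\,\xi\;\geq\;T|\xi|^2\,\tilde{\EE}\bigl[(\nabla\psi(X_T)\cdot U_T)^2\,e^{-\theta\cdot W_T+\frac{1}{2}|\theta|^2 T}\bigr]\;>\;0,$$
the last strict inequality following from the almost-sure positivity of the exponential and the assumption $\tilde{\PP}\bigl((\nabla\psi(X_T)\cdot U_T)\neq 0\bigr)>0$ in $(\mathcal R_{\psi,a})$.

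Finally, to get existence of $\theta^*$ I would establish coercivity. Choose $\epsilon>0$ so that $p_\epsilon:=\tilde{\PP}(|\nabla\psi(X_T)\cdot U_T|>\epsilon)>0$, and then $R>0$ so that $\tilde{\PP}(|W_T|>R)<p_\epsilon/2$; the event $A_{\epsilon,R}:=\{|\nabla\psi(X_T)\cdot U_T|>\epsilon,\ |W_T|\leq R\}$ has $\tilde{\PP}$-probability at least $p_\epsilon/2$. Restricting the expectation defining $v(\theta)$ to $A_{\epsilon,R}$ and using $-\theta\cdot W_T\geq -|\theta|R$ on this event gives
$$v(\theta)\;\geq\;\tfrac{1}{2}p_\epsilon\,\epsilon^2\,e^{-|\theta|R+\frac{T}{2}|\theta|^2}\;\xrightarrow[|\theta|\to\infty]{}\;\infty.$$
Together with continuity and strict convexity this yields a unique global minimizer $\theta^*\in\RR^q$. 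The main technical point is the differentiation step, where one needs the uniform-in-a-neighborhood integrable dominants for $\Phi,\nabla_\theta\Phi,\nabla^2_\theta\Phi$; everything else is a routine consequence of the ingredients above.
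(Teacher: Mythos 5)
Your proposal is correct and follows essentially the same route as the paper's proof (which adapts Proposition~2.1 of \cite{BenalayaHajjiKebaier}): differentiation under the expectation justified by a H\"older domination using $\EE|\nabla\psi(X_T)|^{2a}<\infty$, finiteness of all moments of $U_T$, and Gaussian integrability of $W_T$; strict convexity from the positive-definite Hessian on the event $\{\nabla\psi(X_T)\cdot U_T\neq 0\}$ of positive probability; and coercivity of $v$ to force a unique finite minimizer. The only deviation is cosmetic: for coercivity the paper applies Fatou's lemma to conclude $\liminf_{|\theta|\to\infty}v(\theta)=+\infty$, whereas you derive the equivalent explicit lower bound $v(\theta)\geq \tfrac{1}{2}p_\epsilon\,\epsilon^2\,e^{-|\theta|R+\frac{T}{2}|\theta|^2}$ on the event $A_{\epsilon,R}$, which is equally valid.
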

From a practical point of view it is obvious that the quantity $v(\theta)$ to be optimized is not explicit. 
At this stage, we come up with a new idea  by changing the way of approximating $\theta^*$.   More precisely,  we suggest to approximate $\theta^*$  by $\theta^*_{\ell}:=\underset{\theta \in \RR^q}{\argmin}\, v_{\ell}(\theta),$ with
\begin{multline*}
 v_{\ell}(\theta):=\mathbb{E}\left[  \left( \sqrt{\frac{m^\ell}{(m-1)T}} (\psi(X_{T}^{m^\ell, \theta})-\psi(X_{T}^{m^{\ell-1}, \theta}) )\right)^2 \right] \\
 =\mathbb{E}\left[ \left( \sqrt{\frac{m^\ell}{(m-1)T}} (\psi(X_{T}^{m^\ell})-\psi(X_{T}^{m^{\ell-1}}) ) \right)^2 e^{-\theta \cdot W_{T}+\frac{1}{2}|\theta|^{2}T} \right], \mbox{ for } \ell\geq 1.
\end{multline*}
The last relation is obtained by using a change of probability. In fact, according to Girsanov theorem, the process $(B^\theta, X)$ under $\PP^\theta$ has the same law as $(W, X^\theta)$ under $\PP$.
For $\ell=0$, we simply take
$$
v_{0}(\theta):=\mathbb{E}\left[  \psi(X_{T}^{m^0})^2 e^{-\theta \cdot W_{T}+\frac{1}{2}|\theta|^{2}T} \right].
$$
\begin{remark}\label{RM_MLMC}
The new variance associated to the MLMC is only based on the discretization of the process $X$. In fact, this is different from the problem addressed in our previous work \cite{BenalayaHajjiKebaier}, where we discretized the process $U$ in order to compute
\begin{equation*}
\theta^*_{n}:=\underset{\theta \in \RR^q}{\argmin}\, v^n_{\rm SR}(\theta), \mbox{ with }
 v^n_{\rm SR}(\theta):=\tilde{\mathbb{E}}  \left( \left[ \psi(X_{T}^{n, \theta})^2 + (\nabla \psi(X_{T}^{n,\theta}). U_T^{n,\theta} )^2 \right] e^{-\theta \cdot W_{T}+\frac{1}{2}|\theta|^{2}T} \right).
\end{equation*}
\end{remark}

But, what about the existence and uniqueness of $\theta_{\ell}^*$ ? Under what kind of assumptions does it converge to $\theta_{\ell}^*$ when $\ell\rightarrow\infty$ ? The answers are given in the two following results.
\begin{proposition}
\label{proposition1_MLMC}
 Let $\sigma$ and $b$ be in $\mathscr C^1$ and satisfying condition $(\mathcal H_{b,\sigma})$. Assume that 
 $ \mathbb{P}( (\psi(X_{T}^{m^\ell}) -\psi(X_T^{m^{\ell-1}})) \neq 0) >0$  for $\ell\geq 1$ and $ \mathbb{P}( (\psi(X_{T}^{m^0})\neq 0) >0$. 
 Moreover, let $\psi$ satisfying relation~\eqref{condition_dif_psi1}. 
 
 Then, the function  $\theta \mapsto v_{\ell}(\theta)$   is $\mathscr C^{2}$ and strictly convex  with  $\nabla v_{\ell}(\theta)=\EE H_{\ell}(\theta,X^{m^\ell}_T,X_T^{m^{\ell-1}}W_T)$ for all $\ell \in \NN $ where
\begin{equation}
\label{eq_gradientv_MLMC}
 H_{\ell}(\theta,X_T^{m^\ell},X_{T}^{m^{\ell-1}},W_T):= (\theta T-W_{T}) \left(r_\ell \ (\psi(X_{T}^{m^\ell}) - \psi(X_{T}^{m^{\ell-1}}) ) \right)^2 e^{- \theta\cdot W_{T} + \frac{1}{2} |\theta|^{2}T}, \mbox{ for } \ell\geq 1,
\end{equation}
and for $\ell=0$, we take
$
H_{0}(\theta,X_T^{m^0},X_{T}^{m^{-1}},W_T):=(\theta T-W_{T}) \psi(X_{T}^{m^0})^2e^{- \theta\cdot W_{T} + \frac{1}{2} |\theta|^{2}T}.
$
Moreover, there exists a unique $\theta_{\ell}^{*} \in \mathbb{R}^{q}$ such that $\min_{\theta \in \mathbb{R}^{q}} v_{\ell}(\theta)=v_{\ell}(\theta_{\ell}^{*})$.  
\end{proposition}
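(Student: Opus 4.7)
My plan is to adapt the argument of Proposition~2.1 in \cite{BenalayaHajjiKebaier}, with one simplification and one new ingredient. Setting
\[
\Phi_\ell := r_\ell\bigl(\psi(X_T^{m^\ell}) - \psi(X_T^{m^{\ell-1}})\bigr)\ (\ell\ge 1), \qquad \Phi_0:=\psi(X_T^{m^0}),
\]
and $f(\theta,w):=e^{-\theta\cdot w+\tfrac12|\theta|^2 T}$, the object to analyse is $v_\ell(\theta)=\EE[\Phi_\ell^{\,2}\,f(\theta,W_T)]$. The simplification compared to Proposition~\ref{prop:cvx} is that $\Phi_\ell$ does not depend on $\theta$, so no derivative of $\psi$ ever enters the picture; the new ingredient is that $\Phi_\ell$ is not independent of $W_T$, which forces a Cauchy--Schwarz split at the domination step.

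The first step is to establish $\mathscr{C}^2$ regularity and derive the gradient formula. Property~$(\mathcal{P})$ together with the polynomial growth~\eqref{condition_dif_psi1} gives $\Phi_\ell\in L^p$ for every $p\ge 1$; simultaneously, $W_T$ being Gaussian, $\EE[e^{c|W_T|}]<\infty$ for every $c\ge 0$. Cauchy--Schwarz then dominates $\Phi_\ell^{\,2}$ times any polynomial in $W_T$ times $f(\theta,W_T)$ by an integrable envelope, uniformly for $\theta$ in compact sets, both for $f$ and for its first two $\theta$-derivatives. Dominated convergence delivers $v_\ell\in\mathscr{C}^2(\RR^q)$ and differentiation under the expectation yields the announced formula $\nabla v_\ell(\theta)=\EE[H_\ell(\theta,X_T^{m^\ell},X_T^{m^{\ell-1}},W_T)]$.

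For strict convexity I would differentiate once more, obtaining
\[
\Hess\, v_\ell(\theta) \;=\; \EE\!\left[\Phi_\ell^{\,2}\,\bigl\{T\,I_q + (\theta T - W_T)(\theta T - W_T)^\top\bigr\}\,f(\theta,W_T)\right].
\]
For any $u\in\RR^q\setminus\{0\}$, the bracketed matrix dominates $T|u|^2\,\Phi_\ell^{\,2}\,f(\theta,W_T)$ in the quadratic form, hence
\[
u^\top\,\Hess\, v_\ell(\theta)\,u\;\ge\;T|u|^2\,\EE[\Phi_\ell^{\,2}\,f(\theta,W_T)]\;>\;0
\]
by the assumption $\PP(\Phi_\ell\neq 0)>0$. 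Existence of a minimizer then follows from coercivity: choosing $\varepsilon, A>0$ so that $E:=\{|\Phi_\ell|\ge\varepsilon,\;|W_T|\le A\}$ has positive probability (possible since $\PP(\Phi_\ell\neq 0)>0$ and $W_T$ is unbounded) and restricting the expectation to $E$ gives
\[
v_\ell(\theta)\;\ge\;\varepsilon^2\,e^{\tfrac12|\theta|^2 T - |\theta| A}\,\PP(E),
\]
which diverges to $+\infty$ as $|\theta|\to\infty$. Strict convexity plus coercivity deliver the unique minimizer $\theta_\ell^*$; the case $\ell=0$ is identical with $\psi(X_T^{m^0})^2$ in place of $\Phi_\ell^{\,2}$. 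The only delicate point is the uniform-on-compacts Cauchy--Schwarz domination in the regularity step, which is routine; no deeper difficulty arises because $\theta$ enters $v_\ell$ only through the explicit Gaussian density $f(\theta,W_T)$.
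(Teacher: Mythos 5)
Your proposal is correct and follows essentially the same route as the paper: dominated convergence on compact $\theta$-sets (via a Cauchy--Schwarz/H\"older split between $\Phi_\ell^{\,2}$, which lies in every $L^p$ by $(\mathcal P)$ and \eqref{condition_dif_psi1}, and the Gaussian exponential factor) to get $\mathscr C^2$ regularity and the gradient formula, the identical Hessian identity with positive definiteness coming from the $T I_q$ term and $\PP(\Phi_\ell\neq 0)>0$, and coercivity to obtain the unique minimizer. The only divergence is cosmetic: at the coercivity step the paper invokes Fatou's lemma to conclude $\lim_{|\theta|\to\infty}v_\ell(\theta)=+\infty$, whereas you derive the same divergence from the explicit lower bound $v_\ell(\theta)\ge\varepsilon^2 e^{\frac12|\theta|^2T-|\theta|A}\,\PP(E)$ on a positive-probability event $E=\{|\Phi_\ell|\ge\varepsilon,\;|W_T|\le A\}$, which is equally valid and slightly more quantitative.
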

\begin{proof}
The case $\ell=0$ is treated in the same way as the case $\ell\geq 1$. So, we only give a proof for this last case. The function $\theta\mapsto \left(r_\ell (\psi(X_{T}^{m^\ell}) - \psi(X_{T}^{m^{\ell-1}}))\right)^2 e^{- \theta\cdot W_{T} + \frac{1}{2} |\theta|^{2}T}$ is infinitely continuously differentiable with a first derivative equal to 
 $H_{\ell}(\theta,X_T^{m^\ell},X_{T}^{m^{\ell-1}},W_T)$. Note that, for $c>0$ we have
\begin{equation*}
\sup_{|\theta|\leq c}| H_{\ell}(\theta,X_T^{m^\ell},X_{T}^{m^{\ell-1}},W_T)| \leq (cT+|W_{T}|) \left(r_\ell \ (\psi(X_{T}^{m^\ell}) - \psi(X_{T}^{m^{\ell-1}}))\right)^2 e^{c |W_{T}|+\frac{1}{2} c^{2}T}. 
\end{equation*}
Using H\"{o}lder's inequality, we obtain that for all $q>1$
\begin{equation*}
\mathbb E \sup_{|\theta|\leq c}| H_{\ell}(\theta,X_T^{m^\ell},X_{T}^{m^{\ell-1}},W_T)| \leq e^{\frac{1}{2} c^{2}T} \left\|e^{ c |W_{T}|} (cT+|W_{T}|) \right\|_{\frac{q}{q-1}} \left\| \left| r_\ell \ (\psi(X_T^{m^\ell})-\psi(X_T^{m^{\ell-1}})) \right|^2 \right\|_q.
\end{equation*}
It is clear that $e^{\frac{1}{2} c^{2}T} \left\|e^{ c |W_{T}|} (cT+|W_{T}|) \right\|_{\frac{q}{q-1}}$ is finite. 
Hence, using \eqref{uniform_integrability} we deduce the boundedness of  $\sup_{\ell} \mathbb E \sup_{|\theta|\leq c}|H_{\ell}(\theta,X_T^{m^\ell},X_{T}^{m^{\ell-1}},W_T)|$. 
According to Lebesgue's theorem, we deduce that $v_{\ell}$ is  $\mathscr C^{1}$ in $\ \mathbb{R}^{q}$ and $ \nabla v_{\ell}(\theta)= \mathbb E H_{\ell}(\theta,X_T^{m^\ell},X_{T}^{m^{\ell-1}},W_T)$. In the same way, we prove that $v_{\ell}$ is of class $\mathscr C^{2}$ in $\mathbb{R}^{q}$. So,  we have 
\begin{multline*}
  \textnormal{Hess}(v_{\ell}(\theta)) =\\
   \mathbb E \left[\left((\theta T-W_{T})(\theta T-W_{T})^{\top}+T I_{q}\right) \left(r_\ell \ (\psi(X_{T}^{m^\ell}) - \psi(X_{T}^{m^{\ell-1}}))\right)^2 e^{- \theta.W_{T} + \frac{1}{2} |\theta|^{2}T}\right].  
\end{multline*}
 Since  $ \mathbb{P}(\psi(X_{T}^{m^\ell})-\psi(X_{T}^{m^{\ell-1}})\neq 0) > 0$,  we get for all $ u \in  \mathbb{R}^{q} \backslash \{0\} $
\begin{multline*}
 u^{\top} \ \textnormal{Hess}(v_{\ell}(\theta)) \ u =\\
 \mathbb E \left[ (T|u|^{2} + (u.(\theta T-W_{T}))^{2}) \left( r_\ell \ (\psi(X_{T}^{m^\ell}) - \psi(X_{T}^{m^{\ell-1}}))\right)^2 \ e^{- \theta.W_{T} + \frac{1}{2} |\theta|^{2}T}\right] > 0.  
\end{multline*}
Hence, $v_{\ell}$ is strictly convex.  Consequently,  to prove that the unique minimum is  attained for a finite value of $\theta$,  it will be sufficient to prove that $ \lim_{|\theta|\rightarrow  +\infty} \ v_{\ell}(\theta) = + \infty$.
Recall that
$  v_{\ell}(\theta)= \mathbb E \left[\left(r_\ell \ (\psi(X_{T}^{m^\ell}) - \psi(X_{T}^{m^{\ell-1}}))\right)^2 e^{- \theta.W_{T} + \frac{1}{2} |\theta|^{2}T}\right].$
Using Fatou's lemma, we get
\begin{multline*}
+\infty=\mathbb E \left[\liminf_{|\theta| \rightarrow +\infty} \left( r_\ell \ (\psi(X_{T}^{m^\ell}) - \psi(X_{T}^{m^{\ell-1}}))\right)^2 e^{- \theta.W_{T} + \frac{1}{2} |\theta|^{2}T}\right] \\\leq \liminf_{|\theta| \rightarrow +\infty} \mathbb E \left[ \left(r_\ell \ (\psi(X_{T}^{m^\ell}) - \psi(X_{T}^{m^{\ell-1}}))\right)^2 e^{- \theta.W_{T} + \frac{1}{2} |\theta|^{2}T} \right].
\end{multline*}
This completes the proof.
\end{proof}
\begin{theorem}
\label{th:convergence_MLMC}
Under the assumptions of propositions \ref{prop:cvx} and \ref{proposition1_MLMC}, we have
$$\theta_{\ell}^{*}{\longrightarrow}\theta^{*},\quad\mbox{ as }{\ell\rightarrow\infty}.$$
\end{theorem}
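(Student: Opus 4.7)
Write $Y_\ell := r_\ell(\psi(X_T^{m^\ell})-\psi(X_T^{m^{\ell-1}}))$ and $Y_\infty := \nabla\psi(X_T)\cdot U_T$, so that $v_\ell(\theta)=\mathbb{E}[Y_\ell^2\,e^{-\theta\cdot W_T+\frac12|\theta|^2T}]$ and $v(\theta)=\tilde\mathbb{E}[Y_\infty^2\,e^{-\theta\cdot W_T+\frac12|\theta|^2T}]$. The strategy has three steps: pointwise convergence, uniform coercivity, and a convex/uniqueness argument.

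\textbf{Step 1 (pointwise convergence $v_\ell\to v$).} By the stable convergence \eqref{cv_stable_MLMC} we have $(Y_\ell,W_T)\Rightarrow (Y_\infty,W_T)$ (stable convergence is automatically joint with any fixed $\mathcal{F}$-measurable variable). Hence $Y_\ell^2\,e^{-\theta\cdot W_T+\frac12|\theta|^2T}\Rightarrow Y_\infty^2\,e^{-\theta\cdot W_T+\frac12|\theta|^2T}$. Uniform integrability of this family for each fixed $\theta$ follows from H\"older combined with \eqref{uniform_integrability} and the fact that $e^{-\theta\cdot W_T}$ has moments of every order. Therefore $v_\ell(\theta)\to v(\theta)$ for every $\theta\in\mathbb{R}^q$. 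Since all $v_\ell$ are convex (Proposition \ref{proposition1_MLMC}), this pointwise convergence to the continuous $v$ upgrades automatically to uniform convergence on every compact subset of $\mathbb{R}^q$.

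\textbf{Step 2 (uniform coercivity).} This is the technical heart. Set $c_\ell:=\mathbb{E}[Y_\ell^2]$; by \eqref{result1_MLMC}, $c_\ell\to \tilde\mathbb{E}[Y_\infty^2]=:c_\infty>0$, so $\inf_{\ell\ge \ell_0}c_\ell>0$ for some $\ell_0$. Introduce the probability $dQ_\ell = (Y_\ell^2/c_\ell)\,dP$ and apply Jensen's inequality to the convex function $w\mapsto e^{-\theta\cdot w}$:
\begin{equation*}
v_\ell(\theta) = c_\ell\,e^{\frac12|\theta|^2T}\,\mathbb{E}^{Q_\ell}\!\left[e^{-\theta\cdot W_T}\right] \ge c_\ell\,\exp\!\Big(-\theta\cdot\mathbb{E}^{Q_\ell}[W_T]+\tfrac12|\theta|^2T\Big).
\end{equation*}
By Cauchy--Schwarz, $\bigl|\mathbb{E}^{Q_\ell}[W_T]\bigr|\le c_\ell^{-1}\,(\mathbb{E}[Y_\ell^4])^{1/2}(\mathbb{E}|W_T|^2)^{1/2}$, which is bounded uniformly in $\ell\ge\ell_0$ by \eqref{uniform_integrability}. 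Consequently there exist constants $\kappa>0$ and $M>0$ such that $v_\ell(\theta)\ge \kappa\,e^{-M|\theta|+\frac12|\theta|^2 T}$ for all $\ell\ge\ell_0$ and all $\theta$, proving uniform coercivity.

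\textbf{Step 3 (conclusion).} Since $v_\ell(\theta_\ell^*)\le v_\ell(0)=c_\ell$ is bounded, the uniform coercivity from Step 2 forces $\{\theta_\ell^*\}_{\ell\ge \ell_0}$ to lie in a fixed compact set. Extract any subsequence converging to some $\theta^{**}$. By the uniform convergence on compacts from Step 1, $v_{\ell_k}(\theta_{\ell_k}^*)\to v(\theta^{**})$, while for every fixed $\theta$, $v_{\ell_k}(\theta_{\ell_k}^*)\le v_{\ell_k}(\theta)\to v(\theta)$. Hence $v(\theta^{**})\le v(\theta)$ for every $\theta$, and by the strict convexity of $v$ (Proposition \ref{prop:cvx}) its minimizer is unique, so $\theta^{**}=\theta^*$. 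Every convergent subsequence thus has the same limit $\theta^*$, which together with boundedness yields $\theta_\ell^*\to\theta^*$.

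The step I expect to be the genuine obstacle is Step 2: one really needs a bound on $v_\ell$ from below that is \emph{uniform} in $\ell$ and grows with $|\theta|$; the Jensen argument above achieves this only because $c_\ell$ is bounded away from zero (which is where hypothesis $\mathbb{P}(\nabla\psi(X_T)\cdot U_T\neq 0)>0$ from $(\mathcal R_{\psi,a})$ enters) and because $\mathbb{E}[Y_\ell^4]$ is uniformly bounded via \eqref{uniform_integrability}. The remaining pieces are standard convex-analysis boilerplate once these two ingredients are in place.
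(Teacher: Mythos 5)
Your proof is correct, and it takes a genuinely different route from the paper's. The paper argues in two stages that parallel yours but with different tools: boundedness of $(\theta_\ell^*)_{\ell}$ is obtained by contradiction --- along a subsequence with $|\theta_{\ell_k}^*|\to\infty$ the integrand defining $v_{\ell_k}(\theta_{\ell_k}^*)$ is shown to explode and Fatou's lemma then contradicts $v_{\ell_k}(\theta_{\ell_k}^*)\le v_{\ell_k}(0)\le\sup_\ell\mathbb{E}\,Y_\ell^2<\infty$ --- and subsequential limits are identified through the first-order condition: the paper passes to the limit in $\nabla v_{\ell_k}(\theta_{\ell_k}^*)=\mathbb{E}\,H_{\ell_k}(\theta_{\ell_k}^*,\cdot)=0$, using the uniform integrability supplied by \eqref{uniform_integrability} together with the stable convergence \eqref{cv_stable_MLMC}, to get $\nabla v(\theta_\infty^*)=0$ and conclude by uniqueness of the critical point of the strictly convex $v$. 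You never touch gradients: your quantitative Jensen bound $v_\ell(\theta)\ge\kappa\,e^{-M|\theta|+\frac{1}{2}|\theta|^2T}$ --- valid for large $\ell$ precisely because $c_\ell\to\tilde{\mathbb{E}}\bigl[(\nabla\psi(X_T)\cdot U_T)^2\bigr]>0$ by \eqref{result1_MLMC} and the nondegeneracy in $(\mathcal{R}_{\psi,a})$, and because $\sup_\ell\mathbb{E}\,Y_\ell^4<\infty$ by \eqref{uniform_integrability} with $q=2$ --- replaces the Fatou contradiction, and the identification is variational, in the spirit of $\Gamma$-convergence: pointwise convergence $v_\ell\to v$ (established by the same stable-convergence-plus-uniform-integrability mechanism the paper uses elsewhere, e.g.\ in Step 3 of Theorem \ref{CLTMLMC_adaptatif1}) is upgraded to locally uniform convergence by the classical convexity lemma, so that minimality alone forces every cluster point to minimize $v$. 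Both arguments rest on the same inputs --- \eqref{uniform_integrability}, \eqref{cv_stable_MLMC} and Propositions \ref{prop:cvx}--\ref{proposition1_MLMC} --- but yours buys an explicit, uniform-in-$\ell$ coercivity estimate that sidesteps the delicate point in the paper's Fatou step (there both the integrand $Y_{\ell_k}^2$ and its law vary with $k$, so extracting an almost surely exploding subsequence requires some care that the paper leaves implicit), while the paper's gradient route has the merit of exercising exactly the convergence machinery for $H_\ell$ that is reused in Section \ref{algo:sto} and in the adaptive MLMC analysis. Your self-diagnosis is accurate as well: Step 2 is indeed where $\mathbb{P}(\nabla\psi(X_T)\cdot U_T\neq 0)>0$ and the uniform fourth-moment bound genuinely enter, and your handling of it is complete.
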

\begin{proof} 
First of all, we will prove that  $(\theta_{\ell}^{*})_{\ell \in \NN}$ is a $\RR^q$-bounded sequence.  By way of contradiction, let us suppose that there is a subsequence $(\theta_{\ell_k}^{*})_{k\in \NN}$
that diverges to infinity, $\lim_{k\rightarrow\infty}|\theta_{\ell_k}^{*}|=+\infty$. This implies that on the event $\{\psi(X_{T}^{m^\ell}) - \psi(X_{T}^{m^{\ell-1}}) \neq 0\}$, we have the convergence in probability of the quantity
$\bigl( r_\ell \ (\psi(X_{T}^{m^{\ell_k}}) - \psi(X_{T}^{m^{\ell_{k}-1}}))\bigr)^2 e^{- {\theta}^{*}_{\ell_k}W_{T} + \frac{1}{2} |{\theta}^{*}_{\ell_k}|^{2}T}$ towards $+\infty$.
\\ Then, we can extract a subsequence that converges almost surely towards $+\infty$.
Therefore, we apply Fatou's lemma and we get $\lim_{k\rightarrow \infty} v_{\ell_k}(\theta_{\ell_k}^{*})=+\infty$ while 
$$  
v_{\ell_k}(\theta_{\ell_k}^{*})\leq v_{\ell_k}(0) \leq \sup_{\ell} \mathbb E \left( r_\ell ( \psi(X_{T}^{m^{\ell_k}})-\psi(X_{T}^{m^{\ell_{k}-1}})) \right)^{2} <\infty.
$$
The boundedness of the last expression is a consequence of \eqref{uniform_integrability}.
This leads to a contradiction and we deduce that there is some $M>0$ such that $|\theta_{\ell}^{*}|\leq M$ for all $ \ell \in \NN$. 

Now, it remains to prove 
that the set $S=\{x\in \RR^q\, :\,\theta_{\ell_k}^{*}\rightarrow x \mbox{ for some subsequence } \theta_{\ell_k}^{*}\}$ is reduced to the singleton set $\{\theta^{*}\}$. Let us consider a subsequence $\theta_{\ell_k}^{*}\rightarrow \theta_{\infty}^{*}\in S$ as $k$ tends to infinity. According to Proposition \ref{proposition1_MLMC} above, we have 
$$
\nabla v_{\ell_k}(\theta_{\ell_k}^{*})=\mathbb E \left[ (\theta_{\ell_k}^{*} T-W_{T}) 
\bigl( r_\ell \ (\psi(X_{T}^{m^{\ell_k}}) - \psi(X_{T}^{m^{\ell_{k}-1}})) \bigr)^{2} e^{- \theta_{\ell_k}^{*}.W_{T} + \frac{1}{2}|\theta_{\ell_k}^{*}|^{2}T}
\right]=0.
$$

Now, let $q>1$, using Cauchy Schwarz inequality combined with the boundedness of $\theta_{\ell_k}^{*}$ established in the first part of the proof, we check easily  that there exists $c>0$ depending on 
$q$, $T$ and $M$  such that
\begin{multline*}
 \mathbb E \left[ \bigl|(\theta_{\ell_k}^{*} T-W_{T}) 
  \bigl( r_\ell \ (\psi(X_{T}^{m^{\ell_k}}) - \psi(X_{T}^{m^{\ell_{k}-1}}))\bigr)^2 e^{- \theta_{\ell_k}^{*}.W_{T} + \frac{1}{2}|\theta_{\ell_k}^{*}|^{2}T}\bigr|^{q}\right]\leq \\
 c \, \EE^{1/2} \left( r_\ell \ (\psi(X_T^{m^{\ell_k}})-\psi(X_T^{m^{\ell_{k}-1}}) \right)^{4q}.
\end{multline*}
Then, we get thanks to \eqref{uniform_integrability} the uniform integrability which combined with \eqref{cv_stable_MLMC} yields
$$\nabla v(\theta_{\infty}^{*})= \tilde{\mathbb E}\left[ (\theta_{\infty}^{*} T-W_{T}) 
\bigl(\nabla \psi(X_T). U_T \bigr)^2 e^{- \theta_{\infty}^{*}.W_{T} + \frac{1}{2}|\theta_{\infty}^{*}|^{2}T}
\right]= 0.$$
We complete the proof using the uniqueness of the minimum $\theta^*$.
\end{proof}

\section{Stochastic algorithm with projection}\label{algo:sto}

It is clear that once again from a practical point of view the optimal parameter $\theta^*_\ell$ is still not explicit. 
So, the aim of this section is to introduce and study a truncated version of the Robbins-Monro stochastic algorithm approximating $\theta^*_\ell$ 
and obviously $\theta^*$.
To do so, 
let $\mathcal {\tilde B}=(\tilde \Omega,\mathcal {\tilde F},\tilde \PP)$ be the extended probability space endowed with the filtration
$ (\tilde{\mathcal{F}}_{T,i})_{i\geq 0}=(\sigma(W_{t,j},\tilde W_{t,j}, j \leq i, t \leq T))_{i\geq 0}$ where $((W_{t,j})_{t\geq 0},(\tilde W_{t,j})_{t\geq 0},j\geq 1)$ are independent copies of $((W_{t})_{t\geq 0},(\tilde W_{t})_{t\geq 0})$. Here, we recall that $(W_{t})_{t\geq 0}$ is our initial  $q$-dimensional  standard Brownian motion and $(\tilde W_{t})_{t\geq 0}$ is the new independent $q^2$-dimensional standard Brownian introduced in the expression of the limit process $(U_t)_{t\geq 0}$ given  in relation \eqref{eq_U_MLMC}. We denote by  $((U_{t,j})_{t\geq 0},j\geq 1)$ a sequence 
of independent copies of $(U_{t})_{t\geq 0}$.
Let also $\mathcal K \subset \RR^q$ be a compact convex subset satisfying $0\in \overset{\circ}{\mathcal K}$ (the interior of $\mathcal K$). For a deterministic $\theta_0 \in \mathcal K$, we introduce the sequences $(\theta_i)_{i \in \NN}$ and $(\theta_i^{m^\ell})_{i\in\NN}$, $\ell\in\NN$, defined recursively by
\begin{equation}
\label{SAP_MLMC} 
\left\{
\begin{array}{ll} 
\theta_{i+1} &= \Pi_{\mathcal K }\left[ \theta_i - \gamma_{i+1} H(\theta_i, X_{T,i+1}, U_{T,i+1}, W_{T,i+1}) \right], \\
\theta_{i+1}^{m^\ell} &= \Pi_{\mathcal K} \left[ \theta_i^{m^\ell} - \gamma_{i+1} H_\ell(\theta_i^{m^\ell}, X_{T,i+1}^{m^\ell}, X_{T,i+1}^{m^{\ell-1}}, W_{T,i+1}) \right],\quad \theta_{0}^{m^\ell}=\theta_0,
 \end{array}
\right.
\end{equation}
where $\Pi_{\mathcal K}$ is the Euclidean projection onto the constraint set $\mathcal K$, $H$ and $H_{\ell}$ are given respectively by the following expressions
\begin{align*}
  H(\theta,X_T,U_{T},W_T) :&= (\theta T-W_{T}) \left(\nabla \psi(X_{T}) \cdot U_T \right)^2 e^{- \theta\cdot W_{T} + \frac{1}{2} |\theta|^{2}T} \\
 H_{\ell}(\theta,X_T^{m^\ell},X_{T}^{m^{\ell-1}},W_T) :&= (\theta T-W_{T}) \left( r_\ell \ (\psi(X_{T}^{m^\ell}) - \psi(X_{T}^{m^{\ell-1}}) \right)^2 e^{- \theta\cdot W_{T} + \frac{1}{2} |\theta|^{2}T}, \mbox{ for } \ell\geq 1
\end{align*}
and the gain sequence $(\gamma_i)_{i \in \NN}$ is a decreasing sequence of positive real numbers satisfying
\begin{equation}\label{gain_sequence_MLMC}
\sum_{i=1}^{\infty} \gamma_{i}=\infty \quad \mbox{and} \quad \sum_{i=1}^{\infty} \gamma_{i}^{2} < \infty.
\end{equation}
Here, we recall that for $\ell=0$  we take
$$
H_{0}(\theta,X_T^{m^0},X_{T}^{m^{-1}},W_T):=(\theta T-W_{T}) \psi(X_{T}^{m^0})^2e^{- \theta\cdot W_{T} + \frac{1}{2} |\theta|^{2}T}.
$$
\begin{theorem}
\label{constrained_algo_MLMC}
Under the assumptions of propositions \ref{prop:cvx} and \ref{proposition1_MLMC}, the following assertions hold.
\begin{itemize}
 \item [$\bullet$] If $\theta^*=\arg\min v(\theta)$ is unique s.t $\nabla v(\theta^*)=0$ and $\theta^* \in \mathcal K$ then $\theta_i \overset{a.s} {\underset{i \rightarrow\infty}{\longrightarrow}} \theta^*$.
 \item [$\bullet$] If $\theta_{\ell}^*=\arg\min v_{\ell}(\theta)$ is unique s.t $\nabla v_{\ell}(\theta_{\ell}^*)=0$ and $\theta_{\ell}^* \in \mathcal K$ then $\theta^{m^\ell}_i \overset{a.s}{\underset{i \rightarrow\infty}{\longrightarrow}} \theta_{\ell}^*, \;\forall \ell\geq 0$.
\end{itemize}
\end{theorem}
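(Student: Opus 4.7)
The plan is to apply the standard convergence theory for projected Robbins--Monro algorithms, treating both assertions in parallel since they share identical structure. The key observation is that by Propositions \ref{prop:cvx} and \ref{proposition1_MLMC}, the innovations $H$ and $H_\ell$ are unbiased gradient estimators: setting $\mathcal G_i := \tilde{\mathcal F}_{T,i}$, the iterate $\theta_i$ (resp.\ $\theta_i^{m^\ell}$) is $\mathcal G_i$-measurable and
\begin{equation*}
\EE[H(\theta_i, X_{T,i+1}, U_{T,i+1}, W_{T,i+1}) \mid \mathcal G_i] = \nabla v(\theta_i), \qquad \EE[H_\ell(\theta_i^{m^\ell}, X_{T,i+1}^{m^\ell}, X_{T,i+1}^{m^{\ell-1}}, W_{T,i+1}) \mid \mathcal G_i] = \nabla v_\ell(\theta_i^{m^\ell}).
\end{equation*}

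First I would exploit the nonexpansiveness of the Euclidean projection $\Pi_{\mathcal K}$ together with $\theta^* \in \mathcal K$ (so that $\Pi_{\mathcal K}\theta^* = \theta^*$) to obtain $|\theta_{i+1} - \theta^*|^2 \le |\theta_i - \gamma_{i+1} H(\theta_i, \xi_{i+1}) - \theta^*|^2$. Expanding the right-hand side and conditioning on $\mathcal G_i$ yields the fundamental recursion
\begin{equation*}
\EE\bigl[|\theta_{i+1}-\theta^*|^2 \mid \mathcal G_i\bigr] \le |\theta_i-\theta^*|^2 - 2\gamma_{i+1}(\theta_i-\theta^*)\cdot\nabla v(\theta_i) + \gamma_{i+1}^2\, \EE\bigl[|H(\theta_i,\xi_{i+1})|^2 \mid \mathcal G_i\bigr].
\end{equation*}
Strict convexity of $v$ combined with $\nabla v(\theta^*)=0$ gives $(\theta-\theta^*)\cdot\nabla v(\theta) \ge 0$ for all $\theta$, so the recursion fits exactly the framework of the Robbins--Siegmund almost sure convergence lemma provided we control the noise term.

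The second step is to verify uniform square-integrability of the noise on $\mathcal K$. Since $\mathcal K$ is compact, the exponential weight $e^{-\theta\cdot W_T + \frac{1}{2}|\theta|^2 T}$ lies in every $L^p$ uniformly in $\theta \in \mathcal K$; combining with $(\mathcal R_{\psi,a})$ via H\"older's inequality (exactly as in the proof of Proposition \ref{proposition1_MLMC}) yields $\sup_{\theta\in\mathcal K}\EE|H(\theta,X_T,U_T,W_T)|^2 < \infty$. The analogous bound for $H_\ell$ follows by the same H\"older argument using Lemma \ref{lem:tech}(1) to dominate $(r_\ell(\psi(X_T^{m^\ell})-\psi(X_T^{m^{\ell-1}})))^2$ in any $L^q$. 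Since $\theta_i \in \mathcal K$ almost surely, this gives a uniform-in-$i$ bound on the conditional second moment of the innovation.

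Applying Robbins--Siegmund together with $\sum \gamma_i^2 < \infty$, I would obtain that (i) $|\theta_i - \theta^*|^2$ converges almost surely to a finite random limit $\ell_\infty$, and (ii) $\sum_{i\ge 0}\gamma_{i+1}(\theta_i-\theta^*)\cdot\nabla v(\theta_i) < \infty$ a.s. Combining (ii) with $\sum\gamma_i = \infty$ produces a subsequence along which $(\theta_i - \theta^*)\cdot \nabla v(\theta_i) \to 0$; using strict convexity and the continuity of $\nabla v$ on the compact set $\mathcal K$, this forces a subsequential limit of $\theta_i$ to equal $\theta^*$, which together with (i) identifies $\ell_\infty = 0$. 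The argument for $\theta_i^{m^\ell} \to \theta_\ell^*$ is literally identical with $(v_\ell, H_\ell, \theta_\ell^*)$ in place of $(v, H, \theta^*)$. The main obstacle is the uniform $L^2$ control of $H$ and $H_\ell$ over the compact $\mathcal K$; once that is established via the H\"older estimates above, the rest is a direct application of the Robbins--Siegmund template.
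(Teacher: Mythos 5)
Your proof is correct, and at the level of paper-specific content it establishes exactly the same two ingredients as the published proof: the mean-reversion inequality $\langle \nabla v(\theta),\theta-\theta^*\rangle>0$ for $\theta\neq\theta^*$ (from strict convexity of $v$, resp.\ $v_\ell$, together with $\nabla v(\theta^*)=0$), and the uniform noise bound $\sup_{\theta\in\mathcal K}\EE\left[|H(\theta,X_T,U_T,W_T)|^2\right]<\infty$, resp.\ its analogue for $H_\ell$, obtained via H\"older/Cauchy--Schwarz, assumption $({\mathcal R}_{\psi,a})$ with the moment bounds on $U_T$, and \eqref{uniform_integrability}. Where you genuinely diverge is in the packaging: the paper verifies these two conditions and then invokes Theorem A.1 of Laruelle, Lehalle and Pag\`es \cite{LaruellepagesLehalle} as a black box, whereas you re-derive that projected Robbins--Monro convergence theorem from first principles --- nonexpansiveness of $\Pi_{\mathcal K}$ combined with $\Pi_{\mathcal K}\theta^*=\theta^*$ (which is precisely where the hypothesis $\theta^*\in\mathcal K$ enters), the Robbins--Siegmund lemma under $\sum_i\gamma_i^2<\infty$, and the identification of the limit via $\sum_i\gamma_i=\infty$, compactness of $\mathcal K$ and continuity of $\nabla v$ (available since $v$ and $v_\ell$ are $\mathscr C^2$ by Propositions \ref{prop:cvx} and \ref{proposition1_MLMC}). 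Your route is longer but self-contained and makes explicit where each hypothesis of the theorem is consumed; the paper's is shorter by outsourcing the stochastic-approximation machinery. One small precision: convexity alone yields only $(\theta-\theta^*)\cdot\nabla v(\theta)\ge 0$, which is all Robbins--Siegmund needs, but the identification step (forcing the subsequential limit to equal $\theta^*$ and hence $\ell_\infty=0$) requires the strict inequality for $\theta\neq\theta^*$; you do rely on strictness there, so it would be cleaner to state the strict form of the inequality from the outset rather than the weak form you wrote at the recursion stage.
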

\begin{proof}
Concerning the first assertion, according to Theorem A.1. in Laruelle, Lehalle and Pag\`es \cite{LaruellepagesLehalle} on
Robbins Monro algorithm with projection: to prove that $\theta_i \underset{i \rightarrow\infty}{\longrightarrow} \theta^*$, we need to check firstly that
$$\forall \theta \neq \theta^*, \quad \langle \nabla v(\theta), \theta-\theta^* \rangle >0.$$
This is satisfied using $\nabla v (\theta^*)=0$ and thanks to the convexity of $v$.
Secondly, we have to check the non explosion condition given by
$$\exists C>0 \mbox{ such that } \forall \theta \in \mathcal K, \quad \tilde{\EE} \left[ |H(\theta, X_T, U_T, W_T)|^{2} \right] < C (1+|\theta^2|).$$
Using H\"older's inequality, we can check that for all $1<a'\leq a/2$ we have
\begin{multline}
\tilde{\EE} \left[ |H(\theta, X_T, U_T, W_T)|^{2} \right] \leq e^{ |\theta|^{2}T}
 \EE^{1/a'} \left[|\nabla \psi(X_T)|^{4a'}\right] \\ \tilde{\EE}^{\frac{a'-1}{2a'}} \left[|U_T|^{\frac{8a'}{a'-1}} \right] \EE^{\frac{a'-1}{2a'}} \left[ \left| e^{-\theta \cdot W_{T}} (\theta T-W_{T}) \right|^{\frac{4a'}{a'-1}}\right].
\end{multline}
As $a'\leq a/2$, we deduce the finiteness of  $\mathbb{E}\left[|\nabla{\psi}(X_{T})|^{4a'}\right]$ thanks to our condition 
${({{\bf {\mathcal R}_{\psi,a}}})}$, while the finiteness of  $\tilde{\EE} \left[|U_T|^{\frac{4a'}{a-1}} \right]$ is ensured by Theorem 2.1 in \cite{BenalayaHajjiKebaier}. Consequently, we deduce the existence of $C>0$ such that
\begin{equation}
\tilde{\EE} \left[ |H(\theta, X_T, U_T, W_T)|^{2} \right] \leq C e^{|\theta|^{2}T} \EE^{\frac{a'-1}{2a'}} \left[ \left| e^{-\theta \cdot W_{T}} (\theta T-W_{T}) \right|^{\frac{4a'}{a'-1}}\right]. 
\end{equation}
Therefore, as $\theta \in \mathcal K$, we conclude that $\sup_{\theta \in \mathcal K} \tilde{\EE} \left[ |H(\theta, X_T, U_T, W_T)|^{2} \right] < \infty$.
Concerning the second assertion, we aim to prove that for fixed $\ell\geq 0$, $\theta^{m^\ell}_i \overset{a.s}{\underset{i \rightarrow\infty}{\longrightarrow}} \theta_{\ell}^*$. Here, we only give a proof for $\ell \geq 1$. The case $\ell=0$ is more simple to treat and is based on the same type of arguments as below. So, 
using $\nabla v_{\ell} (\theta_{\ell}^*)=0$ and thanks to the convexity of $v_{\ell}$ ensured by Proposition \ref{proposition1_MLMC}, we prove that 
$$\forall \theta \neq \theta_{\ell}^*, \quad \langle \nabla v_{\ell}(\theta), \theta-\theta_{\ell}^* \rangle >0.$$
For the non explosion condition, we use Cauchy-Schwarz inequality
\begin{multline}
\EE \left[ |H_{\ell}(\theta, X_T^{m^\ell}, X_T^{m^{\ell-1}}, W_T)|^{2} \right] \leq e^{ |\theta|^{2}T}
 \EE^{1/2} \left[\left|r_\ell \ (\psi(X_{T}^{m^\ell}) - \psi(X_{T}^{m^{\ell-1}})\right|^{8}\right] \\ \EE^{\frac{1}{2}} \left[ \left| e^{-\theta \cdot W_{T}} (\theta T-W_{T}) \right|^{{4}}\right].
\end{multline}
Using \eqref{uniform_integrability}, we deduce the existence of $C >0$ such that 
\begin{equation}
\EE \left[ |H_{\ell}(\theta, X_T^{m^\ell}, X_T^{m^{\ell-1}}, W_T)|^{2} \right] \leq C e^{|\theta|^{2}T} \EE^{\frac{1}{2}} \left[ \left| e^{-\theta \cdot W_{T}} (\theta T-W_{T}) \right|^{4}\right]. 
\end{equation}
As $\theta \in \mathcal K$, we conclude that $\sup_{\theta \in  \mathcal K} \EE \left[ |H_{\ell}(\theta, X_T^{m^{\ell}}, X_T^{m^{\ell-1}}, W_T)|^{2} \right] < \infty$.
\end{proof}
The aim of the following result is to give  the asymptotic behavior of $\theta_{i}^{m^\ell}$ when $\ell \rightarrow\infty$.
\begin{theorem}
\label{constrained_rec}
Under the assumptions of propositions \ref{prop:cvx} and \ref{proposition1_MLMC}, we have for all $i\geq 0$ 
$$
\bigl(\theta_{i}^{m^\ell},r_\ell(X_{T,i+1}^{m^\ell}-X_{T,i+1}^{m^{\ell-1}}),W_{T,i+1}\bigr)\overset{\rm stably}{\Longrightarrow}
\bigl(\theta_{i},U_{T,i+1},W_{T,i+1}\bigr) \mbox{ as } \ell\rightarrow\infty,
$$
and consequently 
$$
\theta^{m^\ell}_i \overset{\rm stably}{\Longrightarrow} \theta_i, \mbox{ as } {\ell \rightarrow\infty}.
 $$
\end{theorem}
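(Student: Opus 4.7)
The strategy is to proceed by induction on $i \geq 0$, exploiting the block-independent structure of the enlarged filtration $(\tilde{\mathcal{F}}_{T,i})_{i\geq 0}$ together with the stable limit theorem~\eqref{CV:stable} and Lemma~\ref{lem:tech}(ii). For the base case $i=0$, $\theta_0^{m^\ell}=\theta_0$ is deterministic, so the claim reduces to the joint stable convergence of $(r_\ell(X_{T,1}^{m^\ell}-X_{T,1}^{m^{\ell-1}}),W_{T,1})$ towards $(U_{T,1},W_{T,1})$, which is immediate from~\eqref{CV:stable} applied to the first independent copy, since stable convergence automatically implies joint convergence with the $\mathcal{F}$-measurable $W_{T,1}$.

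For the inductive step, the key observation is that $\theta_i^{m^\ell}$ is a continuous deterministic function of $\theta_0$ together with the $i$ \emph{data blocks} $\bigl\{(r_\ell(\psi(X_{T,j}^{m^\ell})-\psi(X_{T,j}^{m^{\ell-1}})),W_{T,j})\bigr\}_{1\le j\le i}$, obtained by iterating the update map $(\theta,\xi,w)\mapsto\Pi_{\mathcal{K}}[\theta-\gamma_j(\theta T-w)\xi^2 e^{-\theta\cdot w+|\theta|^2 T/2}]$. By Lemma~\ref{lem:tech}(ii) each such block converges $\mathcal{F}$-stably to $(\nabla\psi(X_{T,j})\cdot U_{T,j},W_{T,j})$ as $\ell\to\infty$, and the $i$ blocks are mutually independent for every $\ell$ and in the limit. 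A block-independence principle for stable convergence then gives joint stable convergence of all $i+1$ blocks — appending the new block $(r_\ell(X_{T,i+1}^{m^\ell}-X_{T,i+1}^{m^{\ell-1}}),W_{T,i+1})$ via~\eqref{CV:stable} — to the corresponding independent coupling on the extension. Continuity of the iterated update map, combined with the continuous mapping theorem for stable convergence, then yields the joint convergence asserted at rank $i+1$, and the marginal statement $\theta_i^{m^\ell}\Rightarrow\theta_i$ stably follows by projecting onto the first coordinate.

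The principal technical obstacle is precisely this \emph{block-independence transfer} for stable convergence: marginal stable convergence of independent blocks $Y_\ell^{(j)}\Rightarrow Y^{(j)}$ ($j=1,\ldots,n$) does not automatically produce joint stable convergence of the vector $(Y_\ell^{(1)},\ldots,Y_\ell^{(n)})$ toward the product coupling $(Y^{(1)},\ldots,Y^{(n)})$ on the extended space $\tilde{\mathcal{B}}$. One must verify this directly, by testing expectations of the form $\mathbb{E}[Z\,f(Y_\ell^{(1)},\ldots,Y_\ell^{(n)})]$ against bounded $\mathcal{F}$-measurable $Z$, exploiting the conditional independence of the copies in the pre-limit together with the fact that the limiting noises $(U_{T,j})_{j\geq 1}$ are constructed on $\tilde{\mathcal{B}}$ with independence across $j$. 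Once this block principle is secured, the remainder of the inductive step reduces to routine continuous-mapping arguments.
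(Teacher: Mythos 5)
Your proof is correct and rests on the same core ingredients as the paper's (induction over the copies, the stable limits \eqref{CV:stable} and \eqref{cv_stable_MLMC}, independence across copies, and a mapping step), but it executes the two delicate points differently. On the projection: the paper does not argue via continuity of $\Pi_{\mathcal K}$; instead it shows that the pre-projection limit in \eqref{lim_rec} puts zero mass on $\partial\mathcal K$, using the representation $U_t=\frac{1}{\sqrt 2}\sum_{j,k}Z_t\int_0^t Y^{j,k}_s\,d\tilde W^{j,k}_s$ from Theorem 3 of \cite{KBA} to see that, conditionally on $\theta_i$ and $\sigma(W_{t,i+1},\,t\le T)$, that limit is Gaussian. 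Your shortcut --- the Euclidean projection onto a compact convex set is nonexpansive, hence everywhere continuous, so the continuous mapping theorem for stable convergence applies with no boundary-nullity condition --- is legitimate and in fact simpler; a boundary-charge computation of the paper's kind would only become necessary for a genuinely discontinuous truncation such as Chen's scheme \eqref{chenalgo1_MLMC}. On independence: the paper appends one fresh block per induction step and disposes of the joint convergence in a single sentence (``using once again \eqref{cv_stable_MLMC} and the independence between $\theta^{m^\ell}_{i+1}$ and $(X^{m^\ell}_{T,i+2},X^{m^{\ell-1}}_{T,i+2},W_{T,i+2})$''), whereas you collapse the recursion into one continuous-mapping application on all $i+1$ blocks at once and isolate, as the crux, the transfer from marginal to joint stable convergence for independent blocks. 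That lemma is exactly what the paper leaves implicit, and your verification plan --- testing $\mathbb{E}\bigl[Z\,f(Y^{(1)}_\ell,\dots,Y^{(n)}_\ell)\bigr]$ on product-form $Z$ measurable with respect to the individual copies' $\sigma$-fields, factorizing by independence, then extending by a monotone class and Stone--Weierstrass argument --- is the standard correct way to secure it. One bookkeeping remark: your driving blocks carry the payoff increments $r_\ell(\psi(X^{m^\ell}_{T,j})-\psi(X^{m^{\ell-1}}_{T,j}))$ via Lemma \ref{lem:tech}, while the theorem asserts joint convergence with the raw increment $r_\ell(X^{m^\ell}_{T,i+1}-X^{m^{\ell-1}}_{T,i+1})$; appending the latter through \eqref{CV:stable} is fine precisely because that copy is independent of everything determining $\theta^{m^\ell}_i$. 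In sum: same architecture as the paper, with your version trading the Gaussian boundary argument for plain continuity of $\Pi_{\mathcal K}$ (a genuine simplification), at the cost --- correctly paid --- of making explicit the block-independence principle the paper invokes without comment.
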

\begin{proof}
We proceed by induction, the relation is straightforward for $i=0$ as a consequence of \eqref{cv_stable_MLMC}, using the fac that $\theta_0$ is deterministic. 
Next, we assume the induction property holds at the $i^{\rm th}$ step 
and let us prove it at the $(i+1)^{\rm th}$ step. So, we write 
$$
\theta_{i+1}^{m^\ell} = \Pi_{\mathcal K} \left[ \theta_i^{m^\ell} - \gamma_{i+1} 
(\theta_i^{m^\ell}T-W_{T,i+1}) \left( r_\ell (\psi(X_{T,i+1}^{m^\ell}) - \psi(X_{T,i+1}^{m^{\ell-1}})) \right)^2 
e^{- \theta_i^{m^\ell}\cdot W_{T,i+1} + \frac{1}{2} |\theta_i^{m^\ell}|^{2}T}\right].
$$
where $\mathcal K$ is the compact set used in the projection algorithm. 
Thanks to our induction assumption, when $\ell$ tends to infinity, we have 
\begin{multline}\label{lim_rec}
 \theta_i^{m^\ell} - \gamma_{i+1} 
(\theta_i^{m^\ell}T-W_{T,i+1}) \left( r_\ell (\psi(X_{T,i+1}^{m^\ell}) - \psi(X_{T,i+1}^{m^{\ell-1}})) \right)^2 
e^{- \theta_i^{m^\ell}\cdot W_{T,i+1} + \frac{1}{2} |\theta_i^{m^\ell}|^{2}T} \overset{\rm stably}{\Longrightarrow} \\
\theta_i - \gamma_{i+1}(\theta_i T-W_{T,i+1}) \left(\nabla \psi(X_{T,i+1}) \cdot U_{T,i+1} \right)^2 e^{- \theta_i\cdot W_{T,i+1} + \frac{1}{2} |\theta_i|^{2}T}.
\end{multline}
Now, by Theorem 3 in  \cite{KBA},  the limit process $U$ can be written as 
$
U_t=\frac{1}{\sqrt{2}}\sum_{j,k=1}^q Z_t\int_0^t Y_s^{j,k}d\tilde W_s^{j,k},
$
where the $\RR^{d\times d}$-valued process $(Z_t)_{t\geq 0}$ and the $\RR^{d}$-valued process $(Y_t)_{t\geq 0}$ are both adapted to the natural filtration of the initial Brownian motion $(W_t)_{t\geq 0}$ and are independent from the new Brownian motion $(\tilde W_t)_{t\geq 0}$. Note that  
we have the independence between $\theta_{i}$ and  $(X_{T,i+1},U_{T,i+1}, W_{T,i+1})$. 

So, the distribution of the limit in \eqref{lim_rec} 
conditionally on $\theta_i$ and 
$\sigma(W_{t,i+1},0\leq t\leq T)$ is Gaussian and then 
$$
\tilde{\mathbb P}\left(  \theta_i - \gamma_{i+1}(\theta_i T-W_{T,i+1}) \left(\nabla \psi(X_{T,i+1}) \cdot U_{T,i+1} \right)^2 e^{- \theta_i\cdot W_{T,i+1} + \frac{1}{2} |\theta_i|^{2}T}    \in \partial \mathcal K\right)=0, 
$$
where $\partial \mathcal K$ denotes the border of the compact subset  $\mathcal K$. 
This leads us to deduce the stable convergence in distribution of  $\theta_{i+1}^{m^\ell}$ to $\theta_{i+1}$ as $\ell$ tends to infinity.
We complete the proof using once again \eqref{cv_stable_MLMC} and the independence between $\theta^{m^\ell}_{i+1}$ and  $(X^{m^\ell}_{T,i+2}, X^{m^{\ell-1}}_{T,i+2}, W_{T,i+2})$.
\end{proof}

\begin{remark}\label{original:dl}
It is worthwhile to highlight that when combining the second assertion of Theorem \ref{constrained_algo_MLMC} and  Theorem \ref{th:convergence_MLMC} we get
$$
\lim_{\ell \to \infty }\lim_{i\to \infty } \theta^{m^\ell}_i= \theta^*  \mbox { a.s.}
$$
However, when reverting the order of the indexes in the above convergence we get thanks to Theorem \ref{constrained_rec} and the first assertion of Theorem \ref{constrained_algo_MLMC}
$$
\theta^{m^\ell}_i\underset{ \substack{\ell\to \infty}}{\overset{\rm stably}{\Longrightarrow}}  \theta_i \mbox{ and }  \lim_{i\to \infty }  \theta_i =\theta^* \mbox{ a.s. }
$$
To the best of our knowledge such result does not exist in the literature on stochastic algorithms and it is  crucial to prove the main theorems of the next section.
\end{remark}
\section{Adaptive Euler Multilevel Monte Carlo algorithm}\label{CLTadp_MLMC}
In this section, we prove a central limit theorem for the adaptive Euler MLMC method
which approximates our initial quantity of interest  $\mathbb{E} \psi(X_{T})=\EE\left[\psi(X^{\theta}_T) e^{-\theta\cdot W_T
-\frac{1}{2} |\theta|^{2}T}\right]$ by 
\begin{multline}
\label{algo-MLMC}
Q_n:= \frac{1}{N_0} \sum_{i=1}^{N_0}g(\theta_{i-1}^{m^0}, X_{T,i}^{m^0,\theta_{i-1}^{m^0}},\hat{W}_{T,i})\\ 
+\sum_{\ell=1}^{L} \frac{1}{N_\ell} \sum_{i=1}^{N_\ell}\left(g(\theta_{i-1}^{\ell, m^\ell}, X_{T,i}^{\ell,m^\ell,\theta_{i-1}^{\ell, m^\ell}},W_{T,i}^{\ell})-g(\theta_{i-1}^{\ell,m^{\ell}}, 
X_{T,i}^{\ell,m^{\ell-1},\theta_{i-1}^{\ell, m^{\ell}}},W_{T,i}^{\ell})\right),
\end{multline}
where for all  $ x\in\RR^d \mbox{ and } y\in\RR^q$, $g(\theta,x,y)=\psi(x) e^{-\theta\cdot y-\frac{1}{2} |\theta|^{2}T}$, $L=\frac{ \log n}{ \log m}$.
Here, we recall that the $(L+1)$ empirical means are of course independent. The complete procedure combining the adaptive importance sampling based on Robbins-Monro with projection and the Multilevel Monte Carlo Euler method is presented in  Algorithm \ref{algo_Q}.

\begin{algorithm}
\SetKwData{Left}{left}
\SetKwData{This}{this}
\SetKwData{Up}{up}
\SetKwFunction{Union}{Union}
\SetKwFunction{FindCompress}{FindCompress}
\SetKwInOut{Input}{Input}
\SetKwInOut{Comp}{Compute}
\SetKwInOut{Output}{Output}
\Input{A positive
    sequence $\gamma = ({\gamma_i})_{i\geq 1}$, $(m,L)\in\NN^*\times\NN^*$ and $T>0$;
    \\A point $\theta_0 \in \RR^d \times 
    \overset{\circ}{\mathcal K}$\textcolor{forest}{\tcp*[r]{$\mathcal K$ is a convex, compact subset of $\RR^q$.}}
    The parameter $\alpha$ \textcolor{forest}{\tcp*[r]{the rate of the weak error (see $(\mathcal H_{\varepsilon_n})).$ }}
    The sequence $(a_{\ell})_{0\leq \ell \leq L}$ \textcolor{forest}{\tcp*[r] {  satisfying condition $(\bf \mathcal W)$. }}
    }
\Output{The estimator $Q_n$\textcolor{forest}{\tcp*[r]{{$n=m^L$ is the finest step of discretization.}}}}
\Comp{The sample sizes $N_\ell=\frac{n^{2 \alpha} (m-1) T}{m^\ell a_\ell} \sum_{\ell=1}^{L} a_\ell$ for $\ell\in\{0,\cdots,L\};$}
\BlankLine
\emph{Special treatment for the first empirical mean in $Q_n$}\; 
 Initialization: $\theta=\theta_0$, $Q^1_n=0$;
 
\For{$i\leftarrow 1$ \KwTo $N_0$}{
Generate the triplet $(X^{m^0}_T,X^{m^0,\theta}_T,W_T)$;\\
$Q^1_n \leftarrow Q^1_n+g(\theta,X^{m^0,\theta}_T,W_T)$;\\
$\theta  \leftarrow \theta - \gamma_{i} H_0(\theta,X^{m^0}_T,W_T)$;\\
\If(){$\theta\notin \mathcal K$}
{$\theta  \leftarrow \Pi_{\mathcal K}(\theta)$ \textcolor{forest}{\tcp*[r] { $\Pi_{\mathcal K}$ is the Euclidean projection onto $\mathcal K$. }}}
}
$Q^1_n\leftarrow {Q^1_n}/{N_0};$ 
\BlankLine
\emph{Computation of the $L$ empirical means in the second term of $Q_n$}\; 
 Initialization:  $Q^2_n=0$;\\
\For{$\ell \leftarrow 1$ \KwTo $L$}{
 Initialization: $\theta=\theta_0$, $S_n=0$;
 
\For{$i\leftarrow 1$ \KwTo $N_\ell$}{
Generate the quintuplets $(X^{m^\ell}_T,X^{m^{\ell-1}}_T ,X^{m^\ell,\theta}_T,X^{m^{\ell-1},\theta}_T ,W_T)$;
$S_n \leftarrow S_n+g(\theta,X^{m^\ell,\theta}_T ,W_T)-g(\theta,X^{m^{\ell-1},\theta}_T ,W_T) $;\\
$\theta  \leftarrow \theta - \gamma_{i} H_\ell(\theta,X^{m^\ell}_T, X^{m^\ell-1}_T,W_T)$;\\
\If(){$\theta\notin \mathcal K$}
{$\theta  \leftarrow \Pi_{\mathcal K}(\theta)$;

}
}
$Q^2_n\leftarrow Q^2_n +{S_n}/{N_\ell};$ 
}
\BlankLine
$Q_n\leftarrow Q^1_n + Q^2_n$;
\caption{Adaptive importance sampling Multilevel Monte Carlo Euler method}\label{algo_Q}
\end{algorithm}

\subsection{Asymptotic behavior of the variance of the algorithm}
\begin{theorem} \label{CLTMLMC_adaptatif1}
 Let assumptions of propositions \ref{prop:cvx} and \ref{proposition1_MLMC} hold. If moreover assumption $(\mathcal{H}_{\varepsilon_n})$ is satisfied,
then for the choice of $N_\ell, \ell \in \{ 0,1,...,L\}$ given by (\ref{sample_size_MLMC}), the following convergence holds
\begin{equation*}
\mbox{ for } \alpha \in [1/2,1], \quad \lim\limits_{n \to \infty} n^{2 \alpha} \EE \left[\left| Q_n -\mathbb{E}\psi(X_{T}) \right|^2 \right] = \tilde{\sigma}^2 + C_\psi^2,
\end{equation*}
where $C_\psi$ and $\alpha$ are given by relation $(\mathcal{H}_{\varepsilon_n})$ and $\tilde \sigma^2:= {\Et}\left[ \left[  \nabla \psi(X_{T})\cdot U_{T} \right]^{2} e^{-\theta^{*}\cdot W_{T}+\frac{1}{2}|\theta^{*}|^{2}T}  \right]$. 
\end{theorem}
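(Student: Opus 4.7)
I would begin by splitting the mean-square error as
\begin{equation*}
\EE|Q_n-\EE\psi(X_T)|^2=\V(Q_n)+(\EE Q_n-\EE\psi(X_T))^2
\end{equation*}
and handling the two pieces separately. Denote by $Z_{\ell,i}$ the $i$-th summand of the $\ell$-th empirical block inside $Q_n$. Since the fresh sample entering $Z_{\ell,i}$ is independent of the weight $\theta_{i-1}^{\ell,m^\ell}$, conditioning on the latter and applying the Girsanov identity \eqref{price} at fixed $\theta$ gives $\EE[Z_{\ell,i}\mid\theta_{i-1}^{\ell,m^\ell}]=c_\ell$, where $c_\ell:=\EE\psi(X_T^{m^\ell})-\EE\psi(X_T^{m^{\ell-1}})$ for $\ell\ge 1$ and $c_0:=\EE\psi(X_T^{m^0})$. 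A telescoping sum then yields $\EE Q_n=\EE\psi(X_T^n)$, whence $n^{2\alpha}(\EE Q_n-\EE\psi(X_T))^2=n^{2\alpha}\varepsilon_n^2\to C_\psi^2$ by $(\mathcal H_{\varepsilon_n})$.

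For the variance, the $L+1$ blocks are independent, so $\V(Q_n)=\sum_{\ell=0}^L\V(\bar Q_n^\ell)$ (with $\bar Q_n^\ell$ the $\ell$-th block). Inside a block, the previous computation makes $Z_{\ell,i}-c_\ell$ a martingale-difference sequence with respect to the algorithm's natural filtration, so $\V(\bar Q_n^\ell)=N_\ell^{-2}\sum_{i=1}^{N_\ell}\V(Z_{\ell,i})$. A Girsanov-type change of measure, parallel to the rewriting of $v_\ell$ given just after its definition, yields the key identity $\EE[Z_{\ell,i}^2\mid\theta_{i-1}^{\ell,m^\ell}=\theta]=\tfrac{(m-1)T}{m^\ell}v_\ell(\theta)$ for $\ell\ge 1$. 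Using the prescription \eqref{sample_size_MLMC} together with the algebraic identity $\tfrac{n^{2\alpha}}{N_\ell}\cdot\tfrac{(m-1)T}{m^\ell}=\tfrac{a_\ell}{\sum_{\ell'=1}^L a_{\ell'}}$, one obtains for $\ell\ge 1$
\begin{equation*}
n^{2\alpha}\V(\bar Q_n^\ell)=\frac{a_\ell}{\sum_{\ell'=1}^L a_{\ell'}}\,\bar v_\ell-\frac{n^{2\alpha}c_\ell^2}{N_\ell},\qquad \bar v_\ell:=\frac{1}{N_\ell}\sum_{i=1}^{N_\ell}\EE\,v_\ell(\theta_{i-1}^{\ell,m^\ell}).
\end{equation*}

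The heart of the proof is then to show that $\sum_{\ell=1}^L\tfrac{a_\ell}{\sum_{\ell'}a_{\ell'}}\bar v_\ell\to v(\theta^*)=\tilde\sigma^2$ while all remaining contributions vanish. First, $v_\ell(\theta)\to v(\theta)$ locally uniformly in $\theta$ follows from the change-of-measure rewriting of $v_\ell$ combined with Lemma \ref{lem:tech} and the uniform integrability \eqref{uniform_integrability}. Second, combining $\theta_i^{m^\ell}\to\theta_\ell^*$ a.s. (second assertion of Theorem \ref{constrained_algo_MLMC}) with $\theta_\ell^*\to\theta^*$ (Theorem \ref{th:convergence_MLMC}), and using the compactness of $\mathcal K$ to secure the uniform integrability of $v_\ell(\theta_{i-1}^{\ell,m^\ell})$, one gets $\bar v_\ell\to v(\theta^*)$; a weighted Cesaro argument based on condition $(\mathcal W)$ then lifts this to the weighted sum. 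The level-zero contribution is negligible because $n^{2\alpha}/N_0=a_0/[(m-1)T\sum_{\ell'=1}^L a_{\ell'}]\to 0$ under $(\mathcal W)$ while $\EE Z_{0,i}^2$ stays uniformly bounded. The correction $\sum_{\ell=1}^L n^{2\alpha}c_\ell^2/N_\ell$ is controlled via $c_\ell^2=O(m^{-2\alpha\ell})$ coming from $(\mathcal H_{\varepsilon_n})$: it is bounded by $\tfrac{C}{\sum_{\ell'}a_{\ell'}}\sum_\ell a_\ell m^{(1-2\alpha)\ell}$, which vanishes under $(\mathcal W)$. Adding the two pieces yields the announced limit $\tilde\sigma^2+C_\psi^2$.

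The main obstacle is clearly the joint doubly-indexed limit appearing in $\bar v_\ell$: both the empirical index $i$ inside a level and the level index $\ell$ must tend to infinity simultaneously, which is precisely the scenario designed for in Section \ref{algo:sto} and summarized in Remark \ref{original:dl}. The compactness of the projection set $\mathcal K$ is the secondary but essential ingredient, providing the uniform $L^p$-bounds along the random trajectories needed to pass from almost-sure convergence of the iterates to the convergence in expectation of $v_\ell(\theta_{i-1}^{\ell,m^\ell})$ and of the summands $Z_{\ell,i}^2$.
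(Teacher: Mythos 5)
Your overall architecture matches the paper's proof almost step for step: the bias--variance decomposition, the Girsanov/martingale conditioning that yields $\EE Q_n=\EE\psi(X_T^n)$ and the per-level identity $\EE[Z_{\ell,i}^2\mid\tilde{\mathcal F}_{T,i-1}]=\tfrac{(m-1)T}{m^\ell}\,v_\ell(\theta_{i-1}^{\ell,m^\ell})$, the sample-size algebra $\tfrac{n^{2\alpha}}{N_\ell}\cdot\tfrac{(m-1)T}{m^\ell}=\tfrac{a_\ell}{\sum_{\ell'=1}^L a_{\ell'}}$, the negligibility of the level-zero block, and the identification of $\lim_n\sum_{\ell=1}^L\tfrac{a_\ell}{\sum_{\ell'}a_{\ell'}}\bar v_\ell=v(\theta^*)=\tilde\sigma^2$ through the doubly indexed machinery of Section \ref{algo:sto}. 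One caveat on that last point: the argument you actually write out (``$\theta_i^{m^\ell}\to\theta_\ell^*$ a.s., then $\theta_\ell^*\to\theta^*$'') establishes only the iterated limit in the order $\lim_\ell\lim_i$; since $N_\ell$ and $\ell$ diverge together, Lemma \ref{Toep:ex} requires in addition the reversed order $\lim_i\lim_\ell$, which rests on the stable convergence of Theorem \ref{constrained_rec} combined with $\theta_i\to\theta^*$ a.s.\ (first assertion of Theorem \ref{constrained_algo_MLMC}) and a uniform-integrability passage to expectations. Your closing reference to Remark \ref{original:dl} shows the ingredient is on your radar, but it must be invoked explicitly: without the second iterated limit the interchange is not justified.

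The genuine gap is in your treatment of the correction term $\sum_{\ell=1}^L n^{2\alpha}c_\ell^2/N_\ell=\tfrac{1}{\sum_{\ell'=1}^L a_{\ell'}}\sum_{\ell=1}^L a_\ell\,(r_\ell c_\ell)^2$. Your estimate $c_\ell^2=O(m^{-2\alpha\ell})$ gives $(r_\ell c_\ell)^2=O(m^{(1-2\alpha)\ell})$, and the resulting bound $\tfrac{C}{\sum_{\ell'}a_{\ell'}}\sum_\ell a_\ell\, m^{(1-2\alpha)\ell}$ vanishes only when $\alpha>1/2$; at the boundary value $\alpha=1/2$, which the theorem explicitly covers, $m^{(1-2\alpha)\ell}\equiv 1$ and your bound equals a constant, not $o(1)$, so the step fails. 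Indeed, $(\mathcal H_{\varepsilon_n})$ with $\alpha=1/2$ by itself would give $r_\ell c_\ell\to (1-\sqrt{m})\,C_\psi/\sqrt{(m-1)T}$, which that assumption alone does not force to be zero. The paper closes this case by a different mechanism: by relation \eqref{result1_MLMC} of Lemma \ref{lem:tech} (the uniform integrability \eqref{uniform_integrability} together with the stable convergence \eqref{cv_stable_MLMC}), one has $r_\ell c_\ell=\EE\bigl[r_\ell\bigl(\psi(X_T^{m^\ell})-\psi(X_T^{m^{\ell-1}})\bigr)\bigr]\to\tilde{\EE}\bigl(\nabla\psi(X_T)\cdot U_T\bigr)$, and this limit is zero by the centering property recalled in Section \ref{sec:OP} (Proposition 2.1 of \cite{Keb}); the Toeplitz lemma then annihilates the weighted average for every $\alpha\in[1/2,1]$. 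Replacing your $O(m^{-2\alpha\ell})$ estimate by this argument repairs the proof without altering its structure.
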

\begin{proof}
At first, we rewrite the total error  as follows
\begin{equation}\label{decomposition}
 n^{\alpha}\left(Q_n -  \mathbb{E}\psi(X_{T})\right) = Q_{n}^{1} + Q_{n}^{2} + n^{\alpha} (\mathbb{E} \psi(X_{T}^{n}) - \mathbb{E} \psi(X_{T})),
\end{equation}
where $Q_{n}^1$ and $Q_{n}^{2}$ are given by the following expressions
\begin{equation}
\label{Qn1}
Q_{n}^{1} := n^{\alpha}\left( \frac{1}{N_0} \sum_{i=1}^{N_0} g({\hat\theta}_{i-1}^{m^{0}}, \hat{X}_{T,i}^{m^0,{\hat\theta}_{i-1}^{m^{0}}},W_{T,i}) - \mathbb{E}\psi(X_{T}^{1}) \right).
\end{equation}
\begin{multline*}
Q_{n}^{2} := n^{\alpha} \left[\sum_{\ell=1}^{L} \frac{1}{N_\ell} \sum_{i=1}^{N_\ell} \left( g(\theta_{i-1}^{\ell,m^{\ell}}, X_{T,i}^{\ell,m^{\ell},{\theta}_{i-1}^{\ell, m^{\ell}}},W_{T,i}) -g({\theta}_{i-1}^{\ell, m^{\ell}}, X_{T,i}^{\ell,m^{\ell-1},\theta_{i-1}^{\ell, m^{\ell}}},W_{T,i}) \right. \right.\\
- \left. \left. \mathbb{E} \left[ \psi(X_{T}^{m^{\ell}}) -\psi(X_{T}^{m^{\ell-1}}) \right] \right) \right].
\end{multline*}
Since $Q_n^1$ and $Q_n^2$ are independent and centered, we write also 
\begin{equation*}
 n^{2\alpha} \EE \left[ \left| Q_n -  \mathbb{E}\psi(X_{T})\right|^2 \right]= \EE\left[ |Q_{n}^{1}|^2 \right] + \EE \left[ |Q_{n}^{2}|^2\right] + n^{2 \alpha} (\mathbb{E} \psi(X_{T}^{n}) - \mathbb{E} \psi(X_{T}))^2.
\end{equation*}
Using assumption $(\mathcal{H}_{\varepsilon_n})$, the last term of the previous expression converges towards the discretization constant $C_{\psi}^2$ as $n$ goes to infinity.
\\ {{$\bullet\,$\bf {Step 1.}}\,} 
For the first term $\EE |Q_{n}^{1}|^2$, noticing that $\left(\sum_{i=1}^{k} g(\theta_{i-1}^{m^{0}}, X_{T,i}^{m^0,\theta_{i-1}^{m^{0}}},W_{T,i}) - \mathbb{E}\psi(X_{T}^{m^0}), k \geq 1 \right)$ is a martingale with respect to the filtration 
$\tilde{\mathcal{F}}_{T,k}$, then we write
\begin{eqnarray}
 \EE|Q_n^1|^2 &=& \EE \left( \frac{n^{2\alpha}}{N_0^2}  \sum_{i=1}^{N_0} \EE \left[ \left| g(\theta_{i-1}^{m^{0}}, X_{T,i}^{m^0,\theta_{i-1}^{m^{0}}},W_{T,i}) - \mathbb{E}\psi(X_{T}^{m^0}) \right|^2 | \tilde{\mathcal{F}}_{T,i-1} \right] \right) \nonumber \\
 &=&\frac{n^{2\alpha}}{N_0^2} \sum_{i=1}^{N_0} \left( \EE \left( \psi(X_{T}^{m^0})^2 e^{-\theta_{i-1}^{m^0} \cdot W_{T}+ \frac{1}{2}|\theta_{i-1}^{m^0}|^{2}T}\right) - \left [\mathbb{E}\psi(X_{T}^{m^0}) \right]^2 \right) \nonumber.
\end{eqnarray}
Since $X_{T,i}^{m^0} \independent \tilde{\mathcal F}_{T,i-1}$ and $\theta_{i-1}^{m^0}$ is $\tilde{\mathcal F}_{T,i-1}$-measurable and thanks to Girsanov theorem, we obtain the last equality by introducing a new couple of random variables $(X_T^{m^0},W_T)$ independent of $\tilde{\mathcal F}_{T}= \cup_{i\geq 1} \tilde{\mathcal F}_{T,i}$.
As $N_0=\frac{n^{2\alpha}(m-1)T}{a_0} \sum_{\ell=1}^{L} a_\ell$, we write
\begin{equation}\label{first_term}
\EE|Q_n^1|^2 =\frac{ a_0}{(m-1)T \sum_{\ell=1}^L a_\ell} \frac{1}{N_0} \sum_{i=1}^{N_0} \left( \EE \left( \psi(X_{T}^{m^0})^2 e^{-\theta_{i-1}^{m^0} \cdot W_{T}+ \frac{1}{2}|\theta_{i-1}^{m^0}|^{2}T}\right) - \left [\mathbb{E}\psi(X_{T}^{m^0}) \right]^2 \right). 
\end{equation}
Since $(\theta_{i-1}^{m^0})_{i \in \NN} \in \mathcal K$, we have the existence of 
$c >0,$ s.t. $\sup_{i \in \NN} \left|\psi(X_{T}^{m^0})^2 e^{-\theta_{i-1}^{m^0} \cdot W_{T}+ \frac{1}{2}|\theta_{i-1}^{m^0}|^{2}T} \right|
\leq \psi(X_{T}^{m^0})^2 e^{c |W_{T}|+ \frac{c^{2}}{2}T}.
$
This upper bound is clearly integrable using the H\"older's inequality combined with properties  $(\mathcal{P})$ and \eqref{condition_dif_psi1}.
Therefore, by the Lebesgue's theorem and thanks to the  second assertion of Theorem \ref{constrained_algo_MLMC}, we obtain that 
$$ \lim\limits_{i \to \infty} \EE \left( \psi(X_{T}^{m^0})^2 e^{-\theta_{i-1}^{m^0} \cdot W_{T}+ \frac{1}{2}|\theta_{i-1}^{m^0}|^{2}T}\right) =\EE \left( \psi(X_{T}^{m^0})^2 e^{-\theta^{*}_{0} \cdot W_{T}+ \frac{1}{2}|\theta^{*}_{0}|^{2}T}\right).$$
Thus, using Cesaro's lemma and  $\sum_{\ell=1}^{L} a_\ell \underset{L \rightarrow \infty} {\longrightarrow} \infty $, we conclude that
$
\EE|Q_n^1|^2 \underset {n \rightarrow \infty} {\longrightarrow} 0.
$
\vspace{0.5cm}
\\{{$\bullet\,$\bf {Step 2.}}\,} 
For the second term, we write
$
 \EE |Q_n^2|^2 = \EE [ (\sum_{\ell=1}^{L}  \frac{n^\alpha}{N_\ell} \sum_{i=1}^{N_\ell} Z_{T,i}^{m^\ell, m^{\ell-1}, \theta_{i-1}^{\ell,m^\ell}})^2 ], 
$
where
\begin{multline}
\label{Z_CLT_MLMC}
Z_{T,i}^{m^\ell, m^{\ell-1}, \theta_{i-1}^{\ell,m^\ell}}=\left(g(\theta_{i-1}^{\ell,m^\ell}, X_{T,i}^{\ell,m^\ell,\theta_{i-1}^{\ell,m^\ell}},W_{T,i}^\ell)-g(\theta_{i-1}^{\ell,m^\ell}, X_{T,i}^{\ell,m^{\ell-1},\theta_{i-1}^{\ell,m^\ell}},W_{T,i}^\ell) \right. \\
  - \left. \mathbb{E}[\psi(X_{T}^{m^\ell})- \psi(X_{T}^{m^{\ell-1}})]\right).
\end{multline}
As $(Z_{T,i}^{m^\ell, m^{\ell-1}, \theta_{i-1}^{\ell,m^\ell}})_{1 \leq \ell \leq L}$ are independent, we get
$
 \EE |Q_n^2|^2 = \sum_{\ell=1}^{L} \frac{n^{2\alpha}}{N_\ell^2} \EE ( \sum_{i=1}^{N_\ell} Z_{T,i}^{m^\ell, m^{\ell-1}, \theta_{i-1}^{\ell,m^\ell} } )^2. 
$
By the same kind of arguments as in the first step, noticing that for each $\ell \in \{1,...,L\}$, we have  $(\sum_{i=1}^{k} Z_{T,i}^{m^\ell, m^{\ell-1}, \theta_{i-1}^{\ell,m^\ell}}, k \geq 1)$ is $\tilde{\mathcal F}_{T,k}$ martingale, we write 
\begin{eqnarray*}
\EE |Q_n^2|^2  &=& \EE \left( \sum_{\ell=1}^{L} \frac{n^{2\alpha}}{N_\ell^2} \sum_{i=1}^{N_\ell} \EE \left[ \left(Z_{T,i}^{m^\ell, m^{\ell-1}, \theta_{i-1}^{ m^\ell}}  \right)^2 | \tilde{\mathcal{F}}_{T,i-1}\right] \right)\\
 &=&\sum_{\ell=1}^{L} \frac{n^{2\alpha}}{N_{\ell}}  \left[ \frac{1}{N_\ell} \sum_{i=1}^{N_\ell} \left(\EE\left( [\psi(X_{T}^{m^\ell})-\psi(X_{T}^{m^{\ell-1}})]^2 e^{-\theta_{i-1}^{m^\ell} \cdot W_{T}+\frac{1}{2}|\theta_{i-1}^{m^\ell}|^{2}T}\right) \right. \right. \\ & &\hspace{6cm}\left. \left.-\left(\EE\psi(X_{T}^{m^\ell})-\EE\psi(X_{T}^{m^{\ell-1}})\right)^2 \right) \right].
\end{eqnarray*}
Since for each $\ell \in \{1,...,L\}$, $X_{T,i}^{\ell,m^\ell} \independent \tilde{\mathcal F}_{T,i-1}$ and $\theta_{i-1}^{m^\ell}$ is $\tilde{\mathcal F}_{T,i-1}$-measurable and thanks to Girsanov theorem, we obtain the last equality by introducing a new couple of random variables $(X_T^{m^\ell},W_T)$ independent of $\tilde{\mathcal F}_{T}= \cup_{i\geq 1} \tilde{\mathcal F}_{T,i}$.
As $N_\ell=\frac{n^{2 \alpha} (m-1) T}{m^\ell a_\ell} \sum_{\ell=1}^{L} a_\ell, \ \ell \in \{0,...,L\}$ (see relation \eqref{sample_size_MLMC}), we write
\begin{equation}\label{crochetA122_MLMC}
  \EE |Q_n^2|^2 = \frac{1}{\sum_{\ell=1}^{L} a_\ell}  \sum_{\ell=1}^{L}  a_\ell \mathcal B_{\ell}   
   - \frac{1}{\sum_{\ell=1}^{L} a_\ell}  \sum_{\ell=1}^{L}  a_\ell \left( r_\ell \ \left[\EE\psi(X_{T}^{m^\ell})-\EE\psi(X_{T}^{m^{\ell-1}}) \right] \right)^2,
\end{equation}
where 
$$ 
\mathcal B_{\ell}:=\frac{1}{N_\ell} \sum_{i=1}^{N_\ell}\EE\left( [r_\ell(\psi(X_{T}^{m^\ell})-\psi(X_{T}^{m^{\ell-1}}))]^2 e^{-\theta_{i-1}^{m^\ell} \cdot W_{T}+\frac{1}{2}|\theta_{i-1}^{m^\ell}|^{2}T}\right).
$$
Now, for the last term of relation (\ref{crochetA122_MLMC}), as assumption $(\mathcal H_{\psi})$  is satisfied under $(\mathcal R_{\psi,a}$), then relation \eqref{result1_MLMC} remains valid. 
Therefore, $ r_\ell \left(\EE (\psi(X_{T}^{m^\ell}))-\EE( \psi(X_{T}^{m^{\ell-1}})) \right) $ converges to $\tilde{\EE} \left( \nabla \psi(X_T).U_T \right)=0$ as $\ell$ goes to infinity an by Toeplitz lemma, we obtain that
\begin{equation*}
\lim\limits_{\ell \rightarrow \infty} \frac{1}{\sum_{\ell=1}^{L} a_\ell}  \sum_{\ell=1}^{L}  a_\ell \left( r_\ell \left[\EE\psi(X_{T}^{m^\ell})-\EE\psi(X_{T}^{m^{\ell-1}}) \right] \right)^2 =0.
\end{equation*}
Hence, to complete the proof it is sufficient now to prove 
$
\lim_{\ell\rightarrow\infty} \mathcal B_\ell = \tilde\sigma^2.
$
\vspace{0.5cm}
\\{{$\bullet\,$\bf {Step 3.}}\,} 
To do so, we intend to use Lemma \ref{Toep:ex}, in the appendix section. More precisely, we have to prove that
$$
\lim_{\ell\rightarrow\infty}\lim_{i\rightarrow\infty} b_{i,\ell}=\lim_{i\rightarrow\infty}\lim_{\ell\rightarrow\infty}b_{i,\ell}=\tilde\sigma^2,
$$
where 
$$
b_{i,\ell}:=\EE\left( [r_\ell(\psi(X_{T}^{m^\ell})-\psi(X_{T}^{m^{\ell-1}}))]^2 e^{-\theta_{i-1}^{m^\ell} \cdot W_{T}+\frac{1}{2}|\theta_{i-1}^{m^\ell}|^{2}T}\right).
$$
For the first double limit, since $\theta_{i-1}^{m^\ell} \in \mathcal K$, we have the existence of a positive constant $c$ s.t.
\begin{equation*}
\sup_{i \in \NN} \left|\left(\psi(X_{T}^{m^\ell})-\psi(X_{T}^{m^{\ell-1}})\right)^2 e^{-\theta_{i-1}^{m^\ell} \cdot W_{T}+\frac{1}{2}|\theta_{i-1}^{m^\ell}|^{2}T} \right|
\leq \left(\psi(X_{T}^{m^\ell})-\psi(X_{T}^{m^{\ell-1}})\right)^2 e^{c|W_{T}|+\frac{c^2}{2}T}.
\end{equation*}
This upper bound is clearly integrable using the Cauchy Schwarz inequality combined with  property \eqref{uniform_integrability}.
Therefore, by the Lebesgue's theorem and the second assertion of Theorem \ref{constrained_algo_MLMC}, we obtain that 
$
\lim_{i \rightarrow \infty} b_{i,\ell}= 
\EE\left( [r_\ell (\psi(X_{T}^{m^\ell})-\psi(X_{T}^{m^{\ell-1}}))]^2 e^{-\theta_{\ell}^{*} \cdot W_{T}+\frac{1}{2}|\theta_{\ell}^{*}|^{2}T}\right). 
$
Now, thanks to \eqref{cv_stable_MLMC} and Theorem \ref{th:convergence_MLMC}, we have 
\begin{equation}\label{stablecv_MLMC}
r_\ell \ [\psi(X_{T}^{m^\ell})-\psi(X_{T}^{m^{\ell-1}})] e^{-\frac{1}{2}\theta_{\ell}^{*}\cdot W_{T}+\frac{1}{4}|\theta_{\ell}^{*}|^{2}T}
 \overset{\rm stably}{\underset{\ell \rightarrow \infty}{\Longrightarrow }} \nabla \psi(X_{T})\cdot U_{T} e^{-\frac{1}{2}\theta^{*}\cdot W_{T}+\frac{1}{4} | \theta^{*}|^{2}T}.
\end{equation}
Moreover, for $q>1$ we have by Cauchy-Schwartz inequality
\begin{multline}\label{Uniform_integrability_first}
 \mathbb{E} \left| r_\ell \ [\psi(X_{T}^{m^\ell})-\psi(X_{T}^{m^{\ell-1}})] e^{-\frac{1}{2} \theta_{\ell}^{*}\cdot W_{T}+\frac{1}{4}|\theta_{\ell}^{*}|^{2}T}\right|^{2 q} \\
 \leq r_\ell^{2q} \left[\mathbb{E}  \left| \psi(X_{T}^{m^\ell})-\psi(X_{T}^{m^{\ell-1}}) \right|^{4q}
 \right]^{\frac{1}{2}}e^{ \frac{q(2q+1)}{2} |\theta_{\ell}^{*}|^{2} T}.
\end{multline}
Hence,  as  $(\theta_{\ell}^{*})_{ \ell\ge 1 } \in \mathcal K$,  we obtain the uniform integrability of the sequence $ ( (r_\ell [\psi(X_{T}^{m^\ell})-\psi(X_{T}^{m^{\ell-1}})])^2
 e^{-\theta_{\ell}^{*} \cdot W_{T}+\frac{1}{2}|\theta_{\ell}^{*}|^{2}T})_{\ell\geq 1}$ thanks to property  \eqref{uniform_integrability}. 
Then, by the stable convergence obtained in (\ref{stablecv_MLMC}), we deduce that
$ \lim_{\ell\rightarrow\infty}\lim_{i\rightarrow\infty} b_{i,\ell}=\tilde\sigma^2$.

Now, concerning the second double limit, by Theorem \ref{constrained_rec}, we have for all $i\geq 1$ 
$$
\bigl(\theta_{i-1}^{m^\ell},r_\ell(X_{T}^{m^\ell}-X_{T}^{m^{\ell-1}}),W_{T}\bigr)\overset{\rm stably}{\Longrightarrow}
\bigl(\theta_{i-1},U_{T},W_{T}\bigr) \mbox{ as } \ell\rightarrow\infty.
$$
Since for all $i\in\NN$,  $(\theta_{i-1}^{m^\ell})_{ 1\leq \ell \leq L}  \in \mathcal K$, we use the same type of arguments as above,  to get the uniform integrability
and then deduce that $\lim_{\ell\rightarrow\infty}b_{i,\ell}=\tilde{\mathbb E}\left[ (\nabla \psi(X_{T})\cdot U_{T})^2 e^{-\theta_{i-1}\cdot W_{T}+\frac{1}{2} | \theta_{i-1}|^{2}T}\right].$ We complete the proof by applying the Lebesgue's theorem together with the first assertion of Theorem \ref{constrained_algo_MLMC}.
\end{proof}
Our aim now is to prove a central limit theorem for the adaptive Euler MLMC method.
\subsection{Central Limit Theorem}
\begin{theorem}\label{CLTMLMC_adaptatif2}
 Under assumptions of Theorem \ref{CLTMLMC_adaptatif1} and for the choice of $N_\ell, \ell \in \{ 0,1,...,L\}$ given by (\ref{sample_size_MLMC}), the following convergence holds
\begin{equation*}
n^{\alpha}\left(Q_n -
 \mathbb{E}\psi(X_{T})\right) \underset{n \rightarrow \infty} {\overset{\mathcal{L}}{\longrightarrow}} \mathcal{N} \left(C_{\psi}, \tilde{\sigma}^2 \right),
\end{equation*}
where
$\tilde \sigma^2:= {\Et}\left[ \left[  \nabla \psi(X_{T})\cdot U_{T} \right]^{2} e^{-\theta^{*}\cdot W_{T}+\frac{1}{2}|\theta^{*}|^{2}T}  \right]$.
\end{theorem}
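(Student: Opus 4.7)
My plan is to apply a Lindeberg-Feller central limit theorem for triangular martingale difference arrays to the centered part $Q_n^2$ in the decomposition \eqref{decomposition}, and to handle the two remaining pieces by Slutsky. Assumption $(\mathcal{H}_{\varepsilon_n})$ immediately gives $n^\alpha(\mathbb{E}\psi(X_T^n) - \mathbb{E}\psi(X_T)) \to C_\psi$, and the identity \eqref{first_term} from Step~1 of the proof of Theorem~\ref{CLTMLMC_adaptatif1} already yields $\mathbb{E}|Q_n^1|^2 = O\bigl(1/\sum_\ell a_\ell\bigr) \to 0$, so $Q_n^1 \to 0$ in probability. The whole problem therefore reduces to showing $Q_n^2 \Rightarrow \mathcal{N}(0, \tilde\sigma^2)$.

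I would order the summands of $Q_n^2$ lexicographically as $\xi_{\ell, i}^{(n)} := (n^\alpha/N_\ell)\, Z_{T, i}^{m^\ell, m^{\ell-1}, \theta_{i-1}^{\ell, m^\ell}}$ and equip the array with the filtration $\mathcal{F}_{\ell, i}^{(n)}$ that adjoins one sample at a time, using that samples at different levels are independent. Because $\theta_{i-1}^{\ell, m^\ell}$ is $\mathcal{F}_{\ell, i-1}^{(n)}$-measurable and the $i$-th triplet is independent of the past, Girsanov produces the required centering and $(\xi_{\ell, i}^{(n)})$ is a martingale difference array. The Lindeberg-Feller theorem then reduces the problem to two conditions: convergence in probability $V_n := \sum_{\ell, i} \mathbb{E}\bigl[(\xi_{\ell, i}^{(n)})^2 \mid \mathcal{F}_{\ell, i-1}^{(n)}\bigr] \to \tilde\sigma^2$, and a Lyapunov-type bound $\sum_{\ell, i} \mathbb{E}|\xi_{\ell, i}^{(n)}|^p \to 0$ for some $p > 2$.

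The conditional variance is where I expect the main obstacle. Step~3 of the proof of Theorem~\ref{CLTMLMC_adaptatif1} already establishes $\mathbb{E}[V_n] = \mathbb{E}|Q_n^2|^2 \to \tilde\sigma^2$ via the Toeplitz-style Lemma~\ref{Toep:ex} fed by Theorems~\ref{constrained_algo_MLMC}, \ref{constrained_rec}, and \ref{th:convergence_MLMC} (the innovative doubly indexed limits of Section~\ref{algo:sto}). To upgrade this to convergence in probability, I would show $\mathrm{Var}(V_n) \to 0$: writing $V_n = (\sum_k a_k)^{-1} \sum_\ell a_\ell\, \bar C_\ell^{(n)}$ with $\bar C_\ell^{(n)}$ the empirical mean over $i$ of $r_\ell^2 f_\ell(\theta_{i-1}^{\ell, m^\ell})$ (the function $f_\ell$ being the Girsanov rewriting used in Step~3 of Theorem~\ref{CLTMLMC_adaptatif1}), Cauchy-Schwarz together with the compactness of $\mathcal{K}$ and the moment estimate \eqref{uniform_integrability} give $\sup_{\ell \ge 1,\, \theta \in \mathcal{K}} |r_\ell^2 f_\ell(\theta)| \le C$, hence $|\bar C_\ell^{(n)}| \le C$ almost surely. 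Independence across levels then yields $\mathrm{Var}(V_n) \le C^2 \sum_\ell a_\ell^2 / (\sum_k a_k)^2$, which vanishes by $(\mathcal{W})$ with $p = 4$.

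For the Lyapunov condition, Girsanov combined with the boundedness of the iterates in $\mathcal{K}$ and \eqref{uniform_integrability} with $q = p/2$ bounds $\mathbb{E}|\xi_{\ell, i}^{(n)}|^p$ by a constant times $(n^\alpha/N_\ell)^p r_\ell^{-p}$; plugging in the sample sizes \eqref{sample_size_MLMC} recasts the total as $C\, n^{\alpha(2-p)} \sum_{\ell=1}^L m^{\ell(p/2 - 1)} a_\ell^{p-1} / (\sum_k a_k)^{p-1}$. Since $\sum_{\ell \le L} m^{\ell(p/2-1)} = O(n^{p/2 - 1})$ and $\alpha(2-p) + (p/2 - 1) = (p-2)(1/2 - \alpha) \le 0$, and the $a_\ell$ factor is controlled via $(\mathcal{W})$, this vanishes for a suitable $p \in (2, 4]$. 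Both conditions then give $Q_n^2 \Rightarrow \mathcal{N}(0, \tilde\sigma^2)$, and Slutsky concludes.
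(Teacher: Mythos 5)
Your proposal is correct, but it takes a genuinely different route through the final limit step. The paper never flattens $Q_n^2$ into a martingale difference array: it keeps the level sums $S_{\ell,N_\ell}$ of \eqref{SNL} as $L$ \emph{independent} random variables and applies the Lindeberg--Feller theorem for independent summands (Theorem \ref{CLT Lindeberg Feller}), so its variance condition A1 only requires convergence of the expectations $\sum_{\ell}\EE (S_{\ell,N_\ell}^2)=\EE|Q_n^2|^2\to\tilde\sigma^2$, already established in Theorem \ref{CLTMLMC_adaptatif1}; the price is that the Lyapunov condition must be checked on whole level blocks, which the paper does via Burkholder's inequality \eqref{S_lyapunov_MLMC}, Jensen's inequality, Girsanov, and a second pass through the doubly indexed Toeplitz Lemma \ref{Toep:ex} (the terms $A_\ell$ and $B_\ell$). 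You instead invoke a martingale-array CLT on the individual summands $\xi_{\ell,i}^{(n)}$, which forces you to upgrade the variance statement to convergence \emph{in probability} of $V_n$; your argument for $\mathrm{Var}(V_n)\to 0$ is sound, since the conditional variance summands are a.s.\ bounded uniformly in $\ell$ (projection on the compact $\mathcal K$, Cauchy--Schwarz and \eqref{uniform_integrability}) and independent across levels. In exchange you dispense with Burkholder entirely: your term-by-term Lyapunov bound of order $n^{\alpha p}N_\ell^{1-p}r_\ell^{-p}$ per level is even smaller than the paper's block bound of order $n^{\alpha p}N_\ell^{-p/2}r_\ell^{-p}$, and your exponent computation is correct, with two small points worth making explicit: condition $(\mathcal W)$ is assumed for \emph{some} $p>2$, but it forces $\max_{\ell\le L} a_\ell/\sum_{k\le L} a_k\to 0$ (because $\sum_\ell a_\ell^{p/2}/(\sum_\ell a_\ell)^{p/2}\ge \bigl(\max_\ell a_\ell/\sum_\ell a_\ell\bigr)^{p/2}$), which both legitimizes your use of ``$(\mathcal W)$ with $p=4$'' and, combined with the geometric bound $\sum_{\ell\le L} m^{(\ell-L)(p/2-1)}=O(1)$, closes the borderline case $\alpha=1/2$ where your exponent $(p-2)(1/2-\alpha)$ vanishes. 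Both proofs ultimately rest on the same hard core --- the decomposition \eqref{decomposition} and Steps 1--3 of Theorem \ref{CLTMLMC_adaptatif1}, hence the doubly indexed limit theorems of Section \ref{algo:sto} --- so the difference lies only in the CLT packaging; your route additionally requires the standard martingale CLT with deterministic limit variance (conditional Lindeberg implied by your unconditional Lyapunov bound, and no nesting of filtrations in $n$ needed when the limit is constant), whereas the paper's exploits independence across levels to work with expectations alone.
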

\begin{proof}
We consider the same decomposition given by relation \eqref{decomposition} in the beginning of the proof of Theorem \ref{CLTMLMC_adaptatif1}. 
Under assumption $(\mathcal{H}_{\varepsilon_n})$, the last term on the right hand side of this relation converges to $C_\psi$ as $n$ goes to $\infty$.
For the convergence of the term $Q_n^1$, we use the result of the first step in the proof of Theorem \ref{CLTMLMC_adaptatif1} to deduce that $Q_n^1 \overset{\PP}{\underset{n \rightarrow \infty} \longrightarrow} 0$. 
Concerning the convergence of the term $Q_n^2$, we plan to use the Lindeberg-Feller central limit theorem (see Theorem \ref{CLT Lindeberg Feller}) with the Lyapunov condition.
We introduce the independent random variables 
\begin{equation}
\label{SNL}
(S_{\ell,N_\ell}, \ell \in \{1,...,L\}) \mbox{ where } (S_{\ell,k}= \frac{n^\alpha}{N_\ell} \sum_{i=1}^{k} Z_{T,i}^{m^\ell, m^{\ell-1}, \theta_{i-1}^{\ell,m^\ell}}, k \geq 1) \mbox{ for } \ell \in \{ 1,...,L\},
\end{equation}
and we need to check the assertions A1. and A3. in Theorem \ref{CLT Lindeberg Feller}. 
More precisely, we will prove
\begin{itemize}
 \item [A1.] $\lim\limits_{L \rightarrow \infty} \sum_{\ell=1}^{L} \EE (S_{\ell,N_\ell}^2) = \tilde {\mathbb{E}} \left(\left[  \nabla \psi(X_{T})\cdot U_{T} \right]^{2} e^{-\theta^* \cdot W_{T}+\frac{1}{2}|\theta^*|^{2}T}  \right).$ 
 \item [A3.] For $p>2$, $\lim\limits_{L \rightarrow \infty} \sum_{\ell=1}^{L} \EE (S_{\ell,N_\ell}^p)=0$.
\end{itemize}
As $ \sum_{\ell=1}^{L} \EE (S_{\ell,N_\ell}^2)= \EE(Q_n^2)^2$,  assertion A1. is nothing but the result of the second step in the proof of Theorem \ref{CLTMLMC_adaptatif1}. 
Now, it remains to verify the assertion A3. We get by Burkholder's inequality (see Theorem 2.10 in \cite{HPHCC}): for $p>2$, there exists $C_p >0$ such that
\begin{equation}
\label{S_lyapunov_MLMC}
 \EE \left| S_{\ell,N_\ell} \right|^p \leq C_p \frac{n^{\alpha p}}{N_\ell^{p/2}} \EE \left[ \frac{1}{N_\ell} \sum_{i=1}^{N_\ell} (Z_{T,i}^{m^\ell, m^{\ell-1}, \theta_{i-1}^{\ell, m^\ell}})^2 \right]^{p/2}.
\end{equation}
Moreover, by using Jensen's inequality, we obtain for $p>2$
\begin{equation*}
 \sum_{\ell=1}^{L} \EE \left| S_{\ell, N_\ell} \right|^p \leq \sum_{\ell=1}^{L} C_p \frac{n^{\alpha p}}{N_\ell^{p/2}} \EE \left[ \frac{1}{N_\ell} \sum_{i=1}^{N_\ell} |Z_{T,i}^{m^\ell, m^{\ell-1}, \theta_{i-1}^{\ell,m^\ell}}|^p \right].
 \end{equation*}
Using that $N_\ell=\frac{n^{2 \alpha} (m-1) T}{m^\ell a_\ell} \sum_{\ell=1}^{L} a_\ell$ (see relation \eqref{sample_size_MLMC}) and by conditioning, we get 
\begin{equation*}
\sum_{\ell=1}^{L} \EE \left| S_{\ell,N_\ell} \right|^p \leq \sum_{\ell=1}^{L} C_p \frac{a_\ell^{p/2}}{(\sum_{\ell=1}^L a_\ell)^{p/2}} \EE \left( \frac{1}{N_\ell} \sum_{i=1}^{N_\ell} \EE\left[ r_\ell^p |Z_{T,i}^{m^\ell, m^{\ell-1}, \theta_{i-1}^{\ell, m^\ell}}|^p  | \tilde{\mathcal{F}}_{T,i-1}\right] \right)
\end{equation*}
We write also
\begin{equation}
\label{lyapunov_relation}
 \sum_{\ell=1}^{L} \EE \left| S_{\ell,N_\ell} \right|^p \leq 2^{p-1} C_p \sum_{\ell=1}^{L} \frac{a_\ell^{p/2}}{(\sum_{\ell=1}^L a_\ell)^{p/2}} A_\ell + 2^{p-1} C_p \sum_{\ell=1}^{L} \frac{a_\ell^{p/2}}{(\sum_{\ell=1}^L a_\ell)^{p/2}} B_\ell.
\end{equation}
where
 \begin{eqnarray*}
 A_\ell &=&  \frac{1}{N_\ell} \sum_{i=1}^{N_\ell} b_{i,\ell,p}, \mbox{ with }  b_{i,\ell,p}=\EE \left[ \left| r_\ell \ (\psi(X_T^{m^\ell}) - \psi(X_T^{m^{\ell-1}})) \right|^p e^{-(p-1) \theta_{i-1}^{m^\ell} \cdot W_T -(p/2- 3/2) |\theta_{i-1}^{m^\ell}|^2 T}\right],\\
 B_\ell &=& \left|\EE \left[ r_\ell \ (\psi(X_T^{m^\ell}) - \psi(X_T^{m^{\ell-1}})) \right] \right|^p,
 \end{eqnarray*}
since for each $\ell \in \{1,...,L\}$, $X_{T,i}^{\ell,m^\ell} \independent \tilde{\mathcal F}_{T,i-1}$ and $\theta_{i-1}^{m^\ell}$ is $\tilde{\mathcal F}_{T,i-1}$-measurable and thanks to Girsanov theorem, we obtain the last equality \eqref{lyapunov_relation} by introducing a new couple of random variables $(X_T^{m^\ell},W_T)$ independent of $\tilde{\mathcal F}_{T}= \cup_{i\geq 1} \tilde{\mathcal F}_{T,i}$. Using relation \eqref{result1_MLMC} together with assumption $(\mathcal{W})$ (i.e. $\lim_{\ell \rightarrow \infty} \frac{1}{(\sum_{\ell=1}^{L} a_\ell)^{p/2}} \sum_{\ell=1}^{L} a_\ell^{p/2}=0$) and Toeplitz lemma, we deduce that 
\begin{equation}\label{eq:Bl}
 \lim\limits_{L \rightarrow \infty} \frac{1}{\sum_{\ell=1}^L a_\ell^{p/2}}\sum_{\ell=1}^{L} a_\ell^{p/2} B_\ell =0.
\end{equation}
Concerning the first term on the right hand side in \eqref{lyapunov_relation}, we follow the same arguments as in third step of the proof of Theorem 
\ref{CLTMLMC_adaptatif1} to get
$$
\lim_{\ell\rightarrow\infty}\lim_{i\rightarrow\infty} b_{i,\ell,p}=\lim_{i\rightarrow\infty}\lim_{\ell\rightarrow\infty}b_{i,\ell,p}=
\tilde {\mathbb{E}} \left(\left|  \nabla \psi(X_{T})\cdot U_{T} \right|^{p} e^{-\theta^* \cdot W_{T}+\frac{1}{2}|\theta^*|^{2}T}  \right).
$$
Then, by applying Lemma \ref{Toep:ex}, we obtain
$
\lim\limits_{\ell \rightarrow \infty} A_\ell=\tilde {\mathbb{E}} \left(\left|  \nabla \psi(X_{T})\cdot U_{T} \right|^{p} e^{-\theta^* \cdot W_{T}+\frac{1}{2}|\theta^*|^{2}T}  \right). 
$
Finally, by using once again assumption $(\mathcal{W})$, we conclude that 
\begin{equation}\label{eq:Al}
 \lim\limits_{L \rightarrow \infty} \frac{1}{\sum_{\ell=1}^L a_\ell^{p/2}}\sum_{\ell=1}^{L} a_\ell^{p/2} A_\ell =0.
\end{equation}
We complete the proof using \eqref{lyapunov_relation}, \eqref{eq:Bl} and  \eqref{eq:Al}.
\end{proof}
\section{Numerical illustration}\label{hestonmodel_MLMC}
Let us recall that in our setting, $n$ stands for the finest number of steps, $L$ for the 
final level and both satisfy $n=m^L$, where $m\in \mathbb N\setminus\{0,1\}$ is the number of refiners to be fixed later on.  In this context, the total error refers to the error induced by the  discretization scheme and the MLMC approximation. Hence, the discretization error mainly depends on the choice of the approximation scheme we use. Therefore, to achieve a total error of order $\varepsilon\in(0,1)$ and if we use a discretization scheme satisfying   $(\mathcal H_{\varepsilon_n})$ with $\alpha \in [1/2,1]$ one has to choose the finest number of steps $n$ to be proportional to $\varepsilon^{-\alpha}$ and choose the MLMC sample sizes  $(N_\ell)_{\ell\geq 0}$ satisfying assumptions of Theorem \ref{CLTMLMC_adaptatif2}, so that the total error $\varepsilon$ is of order $1/n^{\alpha}$.
This choice, leads to a  time complexity for the standard  MLMC  approach given by 
\begin{equation*}
\begin{split}
C_{\rm MLMC} &= C \times \left(N_0 + \sum_{\ell=1}^{L} N_\ell (m^\ell + m^{\ell -1})\right) \quad \mbox{with  } C>0 \\
&= C \times \left(\frac{n^{2 \alpha} (m-1) T}{a_0} \sum_{\ell=1}^{L} a_\ell + n^{2 \alpha} \frac{(m^2 -1) T}{m} \sum_{\ell=1}^{L} \frac{1}{a_\ell} \sum_{\ell=1}^{L} a_\ell \right) \quad \mbox{with  } C>0.
\end{split}
\end{equation*}
This time complexity reaches its minimum for the specific choice $a_\ell^{*}=1, \ \ell \in \{ 1,...,L\}$.
Hence, the optimal time complexity of the standard MLMC method is given by
\begin{equation*}
C^*_{\rm MLMC} = C \times \left(\frac{(m-1) T}{a_0 \log m } n^{2 \alpha} \log n + \frac{(m^2 -1) T}{m (\log m)^2} n^{2 \alpha} (\log n)^2  \right) \propto n^{2 \alpha} (\log n)^2.
\end{equation*}
This is of course in line with the original parametrization given in \cite{Giles}. 
\\

Due to its design, it is clear that the adaptive importance sampling  Multilevel Monte Carlo approach (AIS MLMC) is more time consuming than the standard MLMC method (see Algorithm \ref{algo_Q}). 
However, in practice we do not need to reach the optimal variance but just to be close enough to it.  
Based on this idea,  we enforce the adaptive stochastic algorithm to stop after $I\in\mathbb N$ iterations. 
Therefore, the time complexity of the stopped AIS MLMC method is given by 
\begin{align*}
 C_{\rm AIS\,MLMC} &= C \times I\times \sum_{\ell=0}^L m^{\ell} + C' \times \left(\sum_{\ell=0}^{L} N_\ell (m^\ell + m^{\ell -1})\right)\quad \mbox{with  } C,\,C'>0.
\end{align*}
For the same specific choice $a_\ell^{*}=1$,  the  optimal complexity is then given by 
\begin{align*}
 C_{\rm AIS\,MLMC} \propto n^{2 \alpha} (\log n)^2\left(1+ \frac{I}{n^{\alpha}(\log n)^2} \right). 
\end{align*}
According to our numerical simulations (see Figure \ref{fig:1}), the performance of the AIS MLMC method seems to be quite similar if it is stopped after $I=15000$ or $I=1000$ iterations. Hence, it is comforting to notice that our AIS MLMC method approximates efficiently the optimal parameter $\theta^*$ reducing the total variance after just $1000$ iterations. 

For our numerical tests, we consider the problem of option pricing under the Black \& Scholes model.
So, we consider a one-dimensional asset price $(S_t)_{0\leq t, \leq T}$, $T>0$  as solution to   
$$
dS_t=rS_t+\sigma S_t dW_t, \quad S_0=s_0>0,
$$
where $(W_t)_{0\leq t, \leq T}$ is a standard Brownian motion, $r>0$ is the interest rate
and $\sigma$ is the constant volatility of the model. 
We propose, to compute the price of an European Call option with a strike $K$ and maturity $T$ given by $e^{-rT}\mathbb E(S_T-K)_+$. Knowing the benchmark  value of such option, we compare standard MLMC method with our AIS MLMC algorithm with $I=15000$ and $I=1000$ for different values of the finest step $n=m^L$. For each method and each value $n$, we repeat this procedure $M=50$ times and we compute the corresponding CPU time and the root mean squared-error given by
\begin{equation}
RMSE= \sqrt{\frac{1}{M} \sum_{i=1}^{M} (\mbox{Simulated value} -\mbox{Benchmark value} )^2.} 
\label{MSE_MLMC}
\end{equation}
Hence,  for $K=100$, $s_0=130$, $T=1$, $r=\log(1.1)$, $\sigma=0.6$ and $m=4$ we provide a couple of points (RMSE, CPU time) which are plotted on Figure \ref{fig:1}. For these values the benchmark  value  is
$49.898585$. In order to have all curves plotted on the same range, we choose $n\in\{125,175,\dots,925\}$ for the MLMC method and $n\in\{100,150,\dots,550\}$ for the AIS MLMC methods. 

Concerning the stochastic approximation, we take the gain sequence given by $\gamma_i=1/(i+1)$ for $i\in\{0,\cdots,I\}$ and we implement an online adaptive version of the popular Rupert \& Poliak method  known for stabilizing numerically the convergence of the Robbins-Monro type algorithms (see e.g. \cite{PM}).
More precisely,  for  $i\in\{0,\cdots,I\}$ we plug  in \eqref{algo-MLMC}, the approximation  
$
\tilde \theta^{m^{\ell}}_{i+1}=\frac{1}{i+1}\sum_{k=0}^i \theta^{m^{\ell}}_k
$
instead of $\theta^{m^{\ell}}_{i+1}$ given by relation \eqref{SAP_MLMC}. For the Euclidean projection, we take the compact set $\mathcal K=[-10,10]$.
We also test the Chen's projection algorithm (see e.g. \cite{LapLel}), though we did not consider this procedure for our adaptive version of the MLMC method.  More precisely, in the setting of the Black \& Scholes model, we consider  an increasing sequence of compact sets
$(\mathcal{K}_{i})_{i\in \NN}$  containing $0$, satisfying
$\cup_{i=0}^{\infty}\; \mathcal{K}_{i} = \mathbb{R}$  and   $\mathcal{K}_{i} \subsetneq \overset{\circ}{\mathcal{K}}_{i+1}, \forall i \in \NN$,
with $\theta_{0}\in \mathcal{K}_{0} $. For a  gain sequence
$(\gamma_i)_{i\in\NN}$ satisfying (\ref{gain_sequence_MLMC}) and $\ell\geq 0$, we set $(\theta^{m^{\ell}}_0,\alpha_{0}^{m^{\ell}})=(\theta_0,0)$ and define recursively the sequence $(\theta_{i}^{m^{\ell}},\alpha_{i}^{m^{\ell}} )_{i\in\NN}$  by
\begin{equation}\label{chenalgo1_MLMC}
\left\{
\begin{array}{ll}
{\mbox{\bf  if} }&  \theta_i^{m^\ell} - \gamma_{i+1} H_\ell(\theta_i^{m^\ell}, X_{T,i+1}^{m^\ell}, X_{T,i+1}^{m^{\ell-1}}, W_{T,i+1})  \in \mathcal{K}_{\alpha_{i}^{m^{\ell}}},\; {\mbox{\bf     then}}\\
&\theta_{i+1}^{m^\ell} = \theta_i^{m^\ell} - \gamma_{i+1} H_\ell(\theta_i^{m^\ell}, X_{T,i+1}^{m^\ell}, X_{T,i+1}^{m^{\ell-1}}, W_{T,i+1}) , \mbox{ and } \alpha_{i+1}^{m^{\ell}}=\alpha_{i}^{m^{\ell}}\\
{\mbox{\bf  else}}&
\theta_{i+1}^{m^\ell} =\theta_{0}  \mbox{ and } \alpha_{i+1}^{m^{\ell}}=\alpha_{i}^{m^{\ell}},
\end{array}
\right.
\end{equation}
 where $H_\ell$ is introduced in Section \ref{algo:sto} (see relation \eqref{SAP_MLMC} and the 
 definition below).  Of course, like for the stochastic algorithm with compact set projection, we stop the above routine after $I$ iterations.
According to Table \ref{table:time reduction 2dim Yinf1}, the gain factor becomes more important when we consider a smaller RMSE. Actually, for a fixed RMSE of order $10^{-4}$, the AIS MLMC reduces the CPU time by a factor $>2$ in comparison to the MLMC one.
\begin{table}[H]
{\renewcommand{\arraystretch}{1} 
{\setlength{\tabcolsep}{0.6cm}
\begin{center}
\begin{tabular} {*{3}{c}} 
    \hline
      RMSE  & MLMC  CPU time (sec.)  &  AIS MLMC with $I=1000$ CPU time (sec.)   \\
    \hline
      $12\cdot10^{-2}$ & $1455$ & $1076$ \\

     $6\cdot 10^{-2}$ & $6340$ & $4152$ \\

     $45\cdot 10^{-3}$ & $16206$ & $7840$ \\

     \hline
\end{tabular} 
 \caption{Time complexity reduction MLMC  versus AIS MLMC.}
 \label{table:time reduction 2dim Yinf1}
\end{center} 
}}
\end{table} 

According to Figure \ref{fig:1}, both stochastic algorithms, improve in a similar way, the performance of the MLMC method.
\begin{figure} [H]
  \begin{center}
      
    \includegraphics [width=0.7 \textwidth] {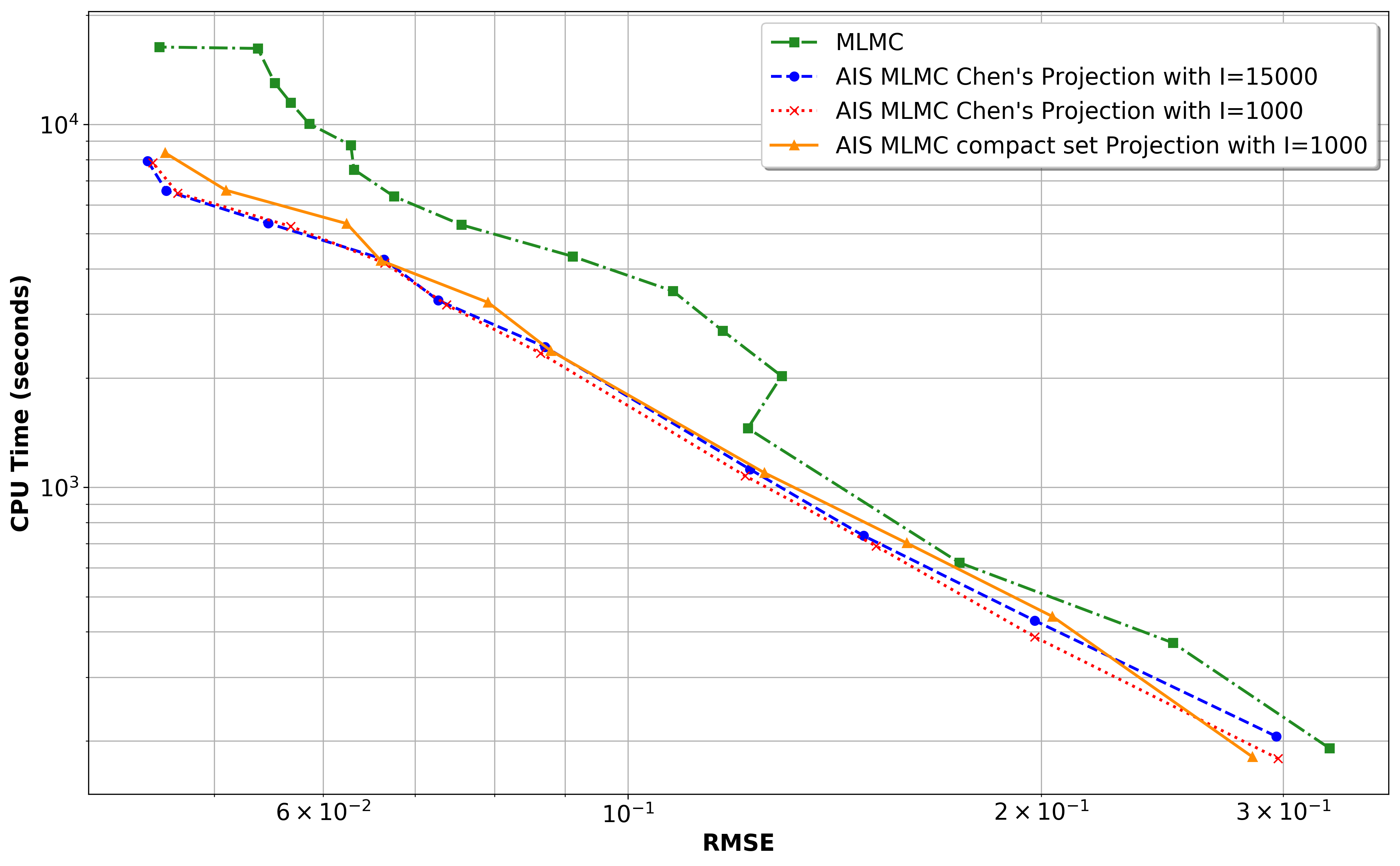}

  \caption{{{\it CPU time vs. RMSE for an European call option under the Black \& Scholes model.}}}
 \label{fig:1}
   \end{center}
\end{figure}   

\section{Appendix}
\subsection{Proof of Lemma \ref{lem:tech}}
The first assertion is an immediate concequence of properties $(\bf \mathcal P)$ and  \eqref{condition_dif_psi1}.
For the second assertion, we apply the Taylor's expansion theorem twice to get
\begin{multline*}
 \psi(X_{T}^{m^\ell}) - \psi(X_T^{m^{\ell-1}}) = \nabla \psi(X_T). (X_{T}^{m^\ell}-X_T^{m^{\ell-1}}) \\
 + (X_{T}^{m^\ell} - X_{T}) \varepsilon(X_{T},X_{T}^{m^{\ell}}-X_T) -(X_{T}^{m^{\ell-1}} - X_{T}) \varepsilon(X_{T},X_{T}^{m^{\ell-1}}-X_T).
\end{multline*}
The function $\varepsilon$ is given by the Taylor-young expansion, so it satisfies $ \varepsilon({X}_{T}, X_{T}^{m^\ell}-X_{T})\overset{\mathbb{P}}{\underset{\ell \rightarrow \infty}{\longrightarrow}} 0$ and $ \varepsilon({X}_{T}, X_{T}^{m^{\ell-1}}-X_{T})\overset{\mathbb{P}}{\underset{\ell \rightarrow \infty} \longrightarrow} 0$.
By property $(\bf \mathcal P)$, we get the tightness of $r_\ell \ (X_{T}^{m^\ell}-X_{T})$ and $r_\ell \ (X_{T}^{m^{\ell-1}}-X_{T})$ and we deduce
\begin{equation*}
 r_\ell \ \left( (X_{T}^{m^\ell} - X_{T}) \varepsilon(X_{T},X_{T}^{m^{\ell}}-X_T) -(X_{T}^{m^{\ell-1}} - X_{T}) \varepsilon(X_{T},X_{T}^{m^{\ell-1}}-X_T) \right) \overset{\mathbb{P}}{\underset{\ell \rightarrow \infty}{\longrightarrow}} 0.
\end{equation*}
So, according to the stable convergence theorem, we conclude that
\begin{equation*}
 r_\ell \ \left(\psi(X_{T}^{m^\ell})-\psi(X_{T}^{m^{\ell-1}}) \right) \overset{\rm stably}\Longrightarrow \nabla \psi(X_T).U_T,  \quad \mbox{as } \ell \rightarrow \infty.
\end{equation*}

\subsection{Extended Version of the Toeplitz Lemma}
We recall the double indexed version of the Toeplitz lemma for a proof see Lemma 4.1 in \cite{BenalayaHajjiKebaier}. 
\begin{lemma}\label{Toep:ex}
\label{toeplitz}
Let $(a_{i})_{  1\leq i \leq k_{n} } $ a sequence of real positive numbers, where $k_{n}\uparrow \infty$   as $n$ tends to infinity, and $(x_{i}^{n})_{i\geq 1,n\geq 1}$ a  double indexed sequence such that
\begin{itemize}
 \item[(i)] $\lim\limits_{n \to \infty} \sum_{ 1\leq i \leq k_{n}} a_{i}=\infty$
 \item[(ii)]  $\lim\limits_{i,n \to \infty}x_{i}^{n} =\lim\limits_{i \to \infty}(\lim\limits_{n \to \infty}x_{i}^{n})= \lim\limits_{n \to \infty}(\lim\limits_{i \to \infty}x_{i}^{n})=x<\infty$
\end{itemize}
Then
$$\lim\limits_{n\rightarrow +\infty} \frac{\sum_{i=1}^{k_{n}} a_{i} x_{i}^{n}}{\sum_{i=1}^{k_{n}} a_{i}} =x.$$
\end{lemma}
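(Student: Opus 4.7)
The strategy is to adapt the classical single-indexed Toeplitz argument to the double-indexed setting, splitting the sum at a threshold that reflects the uniform control provided by the joint limit in~(ii). By replacing $x_i^n$ with $x_i^n - x$ (which leaves the statement invariant since the weights $a_i / \sum_j a_j$ sum to $1$), I may reduce to the case $x = 0$; the goal then becomes to prove that $\sum_{i=1}^{k_n} a_i x_i^n / \sum_{i=1}^{k_n} a_i \to 0$ as $n \to \infty$.

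Given $\varepsilon > 0$, I would first exploit the double limit $\lim_{i,n \to \infty} x_i^n = 0$ to select indices $i_0 \geq 1$ and $n_0 \geq 1$ such that $|x_i^n| < \varepsilon$ whenever $i \geq i_0$ and $n \geq n_0$. Taking $n$ large enough so that additionally $k_n \geq i_0$, I split the numerator as
$$
\sum_{i=1}^{k_n} a_i x_i^n \;=\; \sum_{i=1}^{i_0 - 1} a_i x_i^n \;+\; \sum_{i = i_0}^{k_n} a_i x_i^n.
$$
The tail sum is controlled directly by the $\varepsilon$-estimate, $\bigl|\sum_{i=i_0}^{k_n} a_i x_i^n\bigr| \leq \varepsilon \sum_{i=1}^{k_n} a_i$, contributing at most $\varepsilon$ to the ratio. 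For the head sum, I use the pointwise limit in $n$ (which is part of~(ii)): since $\lim_n x_i^n$ exists for each fixed $i$, the sequence $(x_i^n)_{n \geq 1}$ is bounded for each such $i$, so that $M := \max_{1 \leq i < i_0} \sup_n |x_i^n|$ is finite. Hypothesis~(i) then yields
$$
\frac{\bigl|\sum_{i=1}^{i_0 - 1} a_i x_i^n\bigr|}{\sum_{i=1}^{k_n} a_i} \;\leq\; M \cdot \frac{\sum_{i=1}^{i_0 - 1} a_i}{\sum_{i=1}^{k_n} a_i} \;\xrightarrow[n \to \infty]{}\; 0.
$$
Combining both bounds gives $\limsup_n \bigl|\sum_{i=1}^{k_n} a_i x_i^n\bigr|/\sum_{i=1}^{k_n} a_i \leq \varepsilon$, and letting $\varepsilon \downarrow 0$ concludes the argument.

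The only real subtlety is that the two halves of condition~(ii) play genuinely different roles, and this is the point where one must be attentive: the joint limit controls the tail $i \geq i_0$ uniformly in $n$, whereas the existence of the pointwise limit $\lim_n x_i^n$ at each fixed $i$ is what supplies the boundedness needed to dispose of the finitely many ``head'' terms. Neither ingredient alone would be sufficient. The further equality with the other iterated limit $\lim_n(\lim_i x_i^n)$ plays no role in the proof itself and is merely a natural coherence condition for stating the common value $x$.
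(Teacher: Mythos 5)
Your proof is correct and takes essentially the same route as the source: the paper itself omits the argument and defers to Lemma 4.1 of \cite{BenalayaHajjiKebaier}, where the proof is exactly this classical Toeplitz scheme --- reduce to $x=0$ using that the weights $a_i/\sum_{j=1}^{k_n}a_j$ sum to $1$, split $\sum_{i=1}^{k_n}a_i x_i^n$ at the threshold $i_0$ furnished by the double limit, bound the tail by $\varepsilon\sum_{i=1}^{k_n}a_i$ for $n\geq n_0$, and kill the finitely many head terms via the boundedness of each $(x_i^n)_{n\geq 1}$ (guaranteed by the existence of $\lim_{n\to\infty}x_i^n$) together with hypothesis (i). Your concluding remark is also accurate: only the joint limit and the inner limits $\lim_{n\to\infty}x_i^n$ are actually used, the remaining iterated limit in (ii) serving only to identify the common value $x$.
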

\subsection{Lindeberg Feller Central Limit Theorem for independent random variables}
We recall first the Lindeberg Feller central limit theorem for independent random variables.

\begin{theorem}[Lindeberg Feller Central Limit Theorem \cite{Bil}]
\label{CLT Lindeberg Feller}
Let $(k_n)_{n\in\NN}$ be a sequence such that $k_n \longrightarrow \infty$, as $n \longrightarrow \infty$ and for each $n\in \NN$ we consider a sequence $X_{n1}, X_{n2}, ..., X_{n k_{n}}$ of independent centered and real square integrable random variables. We make the following two assumptions.
\begin{itemize}
 \item [${\it A1.}$] There exists a positive constant $v$ such that
 $\sum_{i=1}^{k_n} \EE (X_{ni})^2 \underset{n\rightarrow\infty}{\longrightarrow} v $.
 \item [${\it A2.}$] Lindeberg's condition holds: that is for all $\varepsilon >0$,
  $\sum_{i=1}^{k_n} \EE (|X_{ni}|^2 \mathbf{1}_{|X_{ni}|\geq \varepsilon}) \underset{n\rightarrow\infty}{\longrightarrow} 0$.
 Then 
 $$\sum_{i=1}^{k_n} X_{ni} \xrightarrow{\mathcal{L}} \mathcal{N}(0,v)\quad\mbox{ as }\; n\rightarrow\infty. $$ 
\end{itemize}
\end{theorem}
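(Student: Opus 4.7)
The plan is to prove the statement via the method of characteristic functions combined with Lévy's continuity theorem. Setting $S_n := \sum_{i=1}^{k_n} X_{ni}$ and $\sigma_{ni}^2 := \EE X_{ni}^2$, the goal is to show that for each fixed $t \in \RR$, the characteristic function $\varphi_{S_n}(t) = \EE e^{it S_n}$ converges to $e^{-vt^2/2}$; the claimed convergence in law then follows from Lévy's theorem.

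First, I would establish the \emph{negligibility of individual variances}: $\max_{1 \le i \le k_n} \sigma_{ni}^2 \to 0$ as $n \to \infty$. This is immediate from the Lindeberg condition by splitting, for any $\varepsilon > 0$,
\begin{equation*}
\sigma_{ni}^2 \le \varepsilon^2 + \EE\bigl(X_{ni}^2 \mathbf{1}_{|X_{ni}| \ge \varepsilon}\bigr) \le \varepsilon^2 + \sum_{j=1}^{k_n}\EE\bigl(X_{nj}^2 \mathbf{1}_{|X_{nj}| \ge \varepsilon}\bigr),
\end{equation*}
taking $n \to \infty$ then $\varepsilon \to 0$.

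Second, by independence $\varphi_{S_n}(t) = \prod_{i=1}^{k_n} \varphi_{ni}(t)$ with $\varphi_{ni}(t) := \EE e^{it X_{ni}}$. Using the classical elementary bound $|e^{iy} - 1 - iy + y^2/2| \le \min(|y|^3/6,\, y^2)$ together with $\EE X_{ni} = 0$, I obtain for every $\varepsilon > 0$
\begin{equation*}
\left| \varphi_{ni}(t) - 1 + \tfrac{1}{2} t^2 \sigma_{ni}^2 \right| \le \tfrac{1}{6} |t|^3 \varepsilon\, \sigma_{ni}^2 + t^2\, \EE\bigl(X_{ni}^2 \mathbf{1}_{|X_{ni}| \ge \varepsilon}\bigr).
\end{equation*}
Summing in $i$ and applying A1 and A2, the total error is bounded by $|t|^3 \varepsilon v / 6 + o(1)$, which can be made arbitrarily small. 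Together with A1 this yields $\sum_{i=1}^{k_n} (\varphi_{ni}(t) - 1) \longrightarrow -t^2 v / 2$.

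Third, to pass from the sum to the product, I would use the principal branch of the complex logarithm. Setting $z_{ni} := \varphi_{ni}(t) - 1$, the previous bound combined with step one gives $\max_i |z_{ni}| \to 0$, so for $n$ large $\log(1 + z_{ni})$ is well defined and satisfies $|\log(1 + z_{ni}) - z_{ni}| \le |z_{ni}|^2$. Then
\begin{equation*}
\left| \sum_{i=1}^{k_n} \log(1+z_{ni}) - \sum_{i=1}^{k_n} z_{ni} \right| \le \sum_{i=1}^{k_n} |z_{ni}|^2 \le \Bigl(\max_i |z_{ni}|\Bigr) \sum_{i=1}^{k_n} |z_{ni}| \longrightarrow 0,
\end{equation*}
since $\sum_i |z_{ni}|$ remains bounded (by $t^2 v + o(1)$). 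Combining with step two yields $\log \varphi_{S_n}(t) \to -t^2 v/2$, and Lévy's continuity theorem concludes.

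The main obstacle is the \emph{uniform} control of the Taylor remainder across the triangular array: one needs both the smallness of each $\sigma_{ni}^2$ and a uniform handling of the tail contributions, and it is precisely the Lindeberg condition A2 that supplies both simultaneously. The logarithm step is then a routine expansion, but it is crucial to have first verified $\max_i |z_{ni}| \to 0$; without this, the passage $\prod \mapsto \exp \sum \log$ is not justified.
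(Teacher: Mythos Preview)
Your proof is correct and follows the standard characteristic-function route (Taylor expansion of $e^{itX_{ni}}$, Lindeberg splitting of the remainder, then the $\log(1+z)\approx z$ trick to pass from product to sum). Note, however, that the paper does \emph{not} supply its own proof of this statement: Theorem~\ref{CLT Lindeberg Feller} is merely recalled in the appendix as a classical result, with a reference to Billingsley~\cite{Bil}. Your argument is in fact essentially the proof one finds in that reference, so there is nothing to compare against and nothing missing on your side.
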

\paragraph{Remark.}
The following assumption  known as the Lyapunov condition implies the Lindeberg's condition {\it A2.}
\begin{itemize}
\item [${\it A3.}$] There exists a real number $a>1$ such that
$$ \sum_{k=1}^{k_{n}} \mathbb{E} \left[|X_{ni}|^{2a} \right] \underset{n\rightarrow\infty}{\longrightarrow} 0.$$
\end{itemize}

\end{document}